\documentclass[leqno]{amsart}
\usepackage[T1,T2A]{fontenc}
\usepackage[utf8]{inputenc}
\usepackage[english]{babel}
\usepackage{amsthm}
\usepackage{color}
\usepackage{amssymb}
\usepackage{amsmath,amsfonts,enumerate}
\usepackage{amsthm}
\usepackage{amsmath}
\usepackage{graphicx}
\usepackage{hyperref}

\numberwithin{equation}{section}

\newtheorem{thm}{Theorem}
\newtheorem{lem}[thm]{Lemma}

\newtheorem{definition}[thm]{Definition}
\newtheorem{corollary}[thm]{Corollary}

\newtheorem{proposition}[thm]{Proposition}

\begin{document}

\title[The boundedness of the Hilbert transformation]
{The boundedness of the Hilbert transformation from one rearrangement invariant
Banach space into another and applications}
\author{F. Sukochev}
\address{School of Mathematics and Statistics, University of New South Wales, Kensington, NSW, 2052, Australia}
\email{f.sukochev@unsw.edu.au}
\author{K. Tulenov}
\address{
Al-Farabi Kazakh National University, 050040 Almaty, Kazakhstan;
Institute of Mathematics and Mathematical Modeling, 050010 Almaty, Kazakhstan.}
\email{tulenov@math.kz}
\author{D. Zanin}
\address{School of Mathematics and Statistics, University of New South Wales, Kensington, NSW, 2052, Australia}
\email{d.zanin@unsw.edu.au}


\subjclass[2010]{46E30, 47B10, 46L51, 46L52, 44A15;  Secondary 47L20, 47C15.}
\keywords{Lorentz spaces, Schatten-Lorentz ideals, Calder\'{o}n operator, Hilbert transformation, triangular truncation operator, optimal symmetric range}
\date{}
\begin{abstract} In this paper, we study the boundedness of the Hilbert transformation in Lorentz function spaces, thereby complementing classical results of Boyd. We also characterize the optimal range of a triangular truncation operator in Schatten-Lorentz ideals. These results further entail sharp commutator estimates and applications to operator Lipschitz functions in Schatten-Lorentz ideals.
\end{abstract}

\maketitle

\section{Introduction}
In his thesis \cite[Chapter II]{Bo2} (see also \cite{B}) Boyd posed a problem of finding necessary and sufficient conditions for the Hilbert transformation,
$$(\mathcal{H}x)(t)=p.v.\frac{1}{\pi}\int_{\mathbb{R}}\frac{x(s)}{t-s}ds,$$
to be bounded from one Banach rearrangement invariant space $E$ into another $F.$ He found that the embedding $E\subseteq F$ is a necessary condition \cite[Corollary 3.2.2]{Bo2}, and some sufficient conditions \cite[Theorems 3.5 and 4.4]{Bo2}. Boyd was able to resolve the problem in the special case $E=F$ \cite[Theorems 3.4 and 4.3]{Bo2} (see also \cite[Theorem 3.7]{B}, \cite[Theorem III.5.18, p. 154]{BSh}, and \cite[Theorem II.7.2, p. 156]{KPS}). However, for an arbitrary couple $(E,F)$ of Banach rearrangement invariant spaces the problem remains unsolved. In this paper, we present its full resolution in the special case when $E=\Lambda_{\phi},$ an arbitrary Lorentz space. In fact, our first main result, Theorem \ref{main th} (and Corollary \ref{opt. range th H}) below, identifies the optimal range for the Hilbert transformation on $\Lambda_{\phi}$ as another Lorentz space $\Lambda_{\psi}.$ Thus, $\mathcal{H}$ acts boundedly from $E=\Lambda_{\phi}$ into rearrangement invariant space $F$ if and only if $\Lambda_{\psi}\subseteq F,$ where $\psi$ is given by formula \eqref{psi function} below. Of course, if the function $\psi$ is equivalent to $\phi,$ then our result simply recovers that of Boyd by ensuring that $E=\Lambda_{\phi}$ has non-trivial Boyd indices.

Recall also that Boyd was also able to characterise the boundedness of the Hilbert transformation for the spaces $\Lambda_{\phi,p}$, where $1<p<\infty$, the $p$-convexification of $\Lambda_{\phi}$ (see \cite[Theorem 2.4, p. 121]{Bo2} and \cite[Theorem 4.2]{B}). Namely, he proved that $\mathcal{H}$ acts boundedly on $\Lambda_{\phi,p},$ $1<p<\infty$ if and only if the latter space is uniformly convex. Thus, Theorem \ref{main th} below may be viewed as a complement to this Boyd's result in the crucial remaining case when $p=1.$

The second part of the paper is concerned with a non-commutative analogy of Boyd's question, when the couple of Banach rearrangement invariant spaces $(E,F)$ is replaced with $(\mathcal{E}(H),\mathcal{F}(H))$, the Banach ideals in $B(H)$, and the singular Hilbert transformation $\mathcal{H}$ is replaced by the triangular truncation operator (see \cite{GK1,GK2}) on integral operators (on the Hilbert space $L_2(\mathbb{R})$), defined by the formula
\begin{equation}\label{T-oper1}
(T(V)x)(t)=\int_{\mathbb{R}}\mathcal{K}(t,s){\rm sgn}(t-s)x(s)ds,\quad x\in L_2(\mathbb{R}),
\end{equation}
where $\mathcal{K}$ is the kernel of the integral operator $V,$
$$(Vx)(t)=\int_{\mathbb{R}}\mathcal{K}(t,s)x(s)ds,\quad x\in L_2(\mathbb{R}).$$
In the setting when $\mathcal{E}(H)=\mathcal{F}(H),$ the full analogy of Boyd's fundamental result was obtained by J. Arazy \cite[Theorem 4.1]{Ar} (see also \cite{GK1,GK2} for the operator $T$). However, as in the commutative case, when $\mathcal{E}(H)\neq\mathcal{F}(H)$ the problem remains unsolved. In our recent work \cite{STZ} this problem was treated in the somewhat different setting of quasi-Banach symmetrically normed ideals. We also refer to \cite{STZ} for detailed discussion of the deep interconnections of this problem with the double operator integration theory and its applications.

Our second main result, Theorem \ref{opt. range th T}, shows that the operator $T$ maps the Lorentz ideal $\Lambda_{\phi}(H)$ into the ideal $\mathcal{F}(H)$ if and only if $\Lambda_{\psi}(H)\subset \mathcal{F}(H),$ where $\psi$ is given by formula \eqref{psi function}. The applications of this result to Lipschitz and commutator estimates are given in Section 5.

\section{Preliminaries}

Throughout this paper, we write $\mathcal{A}\lesssim \mathcal{B}$ if there is a constant $c_{abs}> 0$ such that
$\mathcal{A}\leq c_{abs}\mathcal{B}.$ We write $\mathcal{A}\approx \mathcal{B}$ if both $\mathcal{A}\lesssim \mathcal{B}$ and $\mathcal{A}\gtrsim \mathcal{B}$ hold, possibly with different
constants.

\subsection{Symmetric (quasi-)Banach function spaces and operator ideals}
Let $(I,m)$ denote the measure space $I = \mathbb{R}_{+},\mathbb{R}$ (resp. $I=\mathbb{Z}_+,\mathbb{Z}$), where $\mathbb{R}_{+}:=(0,\infty)$ (resp. $\mathbb{Z}_+:=\mathbb{Z}_{\geq 0}$) and $\mathbb{R}$ is the set of real (resp. $\mathbb{Z}$ the set of integer) numbers, equipped with Lebesgue measure (resp. counting measure) $m.$
 Let $L(I,m)$ be the space of all measurable real-valued functions (resp. sequences) on $I$ equipped with Lebesgue measure (resp. counting measure) $m$ i.e. functions which coincide almost everywhere are considered identical. Define $L_0(I)$ to be the subset of $L(I,m)$ which consists of all functions (resp. sequences) $x$ such that $m(\{t : |x(t)| > s\})$ is finite for some $s > 0.$

For $x\in L_0(I)$ (where $I = \mathbb{R}_{+}$ or $\mathbb{R}$), we denote by $\mu(x)$ the decreasing rearrangement of the function $|x|.$ That is,
$$\mu(t,x)=\inf\{s\geq0:\ m(\{|x|>s\})\leq t\},\quad t>0.$$
If $I = \mathbb{Z}_+, \mathbb{Z}$, and $m$ is the counting measure, then $L_0(I) = \ell_\infty (I)$, where $\ell_\infty (I)$ denotes the space of all bounded sequences on $I$. In this case, for a sequence $x=\{x_n\}_{n\geq0}$ in $\ell_{\infty}(\mathbb{Z}_{+})$ (resp. $\ell_{\infty}(\mathbb{Z})$), we denote by $\mu(x)$ the sequence $|x|=\{|x_{n}|\}_{n\geq0}$ rearranged to be in decreasing order.

\begin{definition}\label{Sym} We say that $(E,\|\cdot\|_E)$ is a symmetric (quasi-)Banach function  space on $I$ if the following holds:
\begin{enumerate}[{\rm (a)}]
\item $E(I)$ is a subset of $L_0(I);$
\item $(E(I),\|\cdot\|_{E(I)})$ is a (quasi-)Banach space;
\item If $x\in E(I)$ and if $y\in L_0(I)$  are such that $|y|\leq|x|,$ then $y\in E(I)$ and $\|y\|_{E(I)}\leq\|x\|_{E(I)};$
\item If $x\in E(I)$ and if $y\in L_0(I)$ are such that $\mu(y)=\mu(x),$ then $y\in E(I)$ and $\|y\|_{E(I)}=\|x\|_{E(I)}.$
\end{enumerate}
\end{definition}
It is well known that $L_{p}(I)$ (resp. $\ell_p(I)$), $(0<p \leq\infty)$ is a basic example of (quasi-)Banach symmetric spaces of functions (resp. sequences).

Let $E$ be a symmetric Banach function space on $I.$
For $s>0,$ the dilation operator $D_{s}: E\to E$ is defined
by setting
$$D_{s}f(t)=f(t/s), \quad t>0, f\in E.$$
The upper and lower Boyd indices of $E(I)$ are numbers $\overline{\beta}_{E}$ and $\underline{\beta}_{E}$ defined by
$$
\overline{\beta}_{E}:=\lim_{s\downarrow 0+}\frac{\log\|D_{s}\|_{E\rightarrow E}}{\log s}, \,\ \underline{\beta}_{E}:=\lim_{s\rightarrow \infty}\frac{\log\|D_{s}\|_{E\rightarrow E}}{\log s}.
$$
Moreover, they satisfy $0\leq\underline{\beta}_{E}\leq\overline{\beta}_{E}\leq 1$
(see \cite[Definition III.5.12 and Proposition III.5.13, p. 149]{BSh}).
As is easily checked, if $E=L_p,$ $1\leq p\leq\infty,$
then $\underline{\beta}_{E}=\overline{\beta}_{E}=1/p.$ It is by now a classical result of D. W. Boyd \cite{Bo2, Bo1} that if $1\leq p<q\leq\infty,$ then $E$ is an interpolation space for the pair
$(L_p, L_q)$ if and only if $1/q\leq \underline{\beta}_{E}\leq\overline{\beta}_{E}<1/p.$ If $0<\underline{\beta}_{E}\leq\overline{\beta}_{E}<1,$ we shall say simply that $E$ has non-trivial Boyd indices.


One of the central results in the theory of interpolation of symmetric Banach
function spaces is the Boyd interpolation theorem which finds its roots in the
seminal work of Calder\'{o}n \cite{Cal}. For a given symmetric function space $E$ on $[0,\infty)$,
based on weak type estimates for a linear integral operator
introduced by Calder\'{o}n, Boyd \cite{Bo1} showed that every sublinear operator on $E$ which
is simultaneously of weak types $(p, p)$ and $(q, q)$ is bounded on $E$ if and only if
$1\leq p<\underline\beta_E\leq \overline\beta_E<q<\infty.$


Let $H$ denote a fixed separable Hilbert space and let $B(H)$ be the algebra of all bounded operators on $H.$ Let us denote by $K(H)$ the ideal of compact operators on $H$ and $\big\{\mu(n,A)\big\}_{n\in \mathbb{Z}_{+}}$ is the sequence of singular values of a compact operator $A$ (see \cite[Chapter II]{GK1}).

\begin{definition}\label{NC Sym} Let $\mathcal{E}(H)$  be a linear subset in $B(H)$ equipped with a complete norm $\|\cdot\|_{\mathcal{E}(H)}.$
We say that $\mathcal{E}(H)$ is a symmetric (quasi-)Banach operator ideal (in $B(H)$) if
for $A\in \mathcal{E}(H)$ and for every $B\in B(H)$ with $\mu(n,B)\leq\mu(n,A),$ $\forall n\in\mathbb{Z}_{+},$ we have $B\in \mathcal{E}(H)$ and $\|B\|_{\mathcal{E}(H)}\leq\|A\|_{\mathcal{E}(H)}$.
\end{definition}

Recall the construction of a symmetric (quasi-)Banach operator ideal (or non-commutative symmetric (quasi-)Banach ideal) $\mathcal{E}(H).$ Let $E$ be a symmetric Banach sequence space on $\mathbb{Z}_{+}.$ Set
$$\mathcal{E}(H)=\Big\{A\in K(H):\ \mu(A)\in E(\mathbb{Z}_{+})\Big\}.$$
We equip $\mathcal{E}(H)$ with a natural norm
$$\|A\|_{\mathcal{E}(H)}:=\|\mu(A)\|_{E(\mathbb{Z}_{+})},\quad A\in \mathcal{E}(H).$$
Set
$$\mathcal{E}(H)=\Big\{A\in K(H):\ \mu(A)\in E(\mathbb{Z}_{+})\Big\}.$$
It was proved in \cite{KS} (see also \cite[Question 2.5.5, p. 58]{LSZ}) that so defined $(\mathcal{E}(H),\|\cdot\|_{\mathcal{E}(H)})$
is a symmetric (quasi-)Banach operator ideal.
An extensive discussion of the various properties of such spaces can be found
in \cite{KS,LSZ}.
In particular, if $E=\ell_{p},$ $1\leq p<\infty,$ then we obtain
$$\mathcal{L}_{p}(H)=\Big\{A\in K(H):\ \mu(A)\in \ell_{p}(\mathbb{Z}_{+})\Big\},$$
which is so called Schatten-von Neumann class of all compact operators $A:H\rightarrow H$ with finite norm
$$\|A\|_{\mathcal{L}_{p}(H)}:=\big(\sum_{k=0}^{\infty}\mu(k,A)^{p}\big)^{1/p}.$$
If $p=\infty,$ then we set $\mathcal{L}_{\infty}(H):=B(H)$ with the uniform norm, i.e.  $\|A\|_{\mathcal{L}_{\infty}(H)}:=\|A\|,$ $A\in B(H).$
It is easy to see that these are symmetric Banach ideals (see \cite{LSZ} for more details).

\subsection{Lorentz spaces}\label{NC Lorentz}
\begin{definition}\label{quasiconcave definition}\cite[Definition II. 1.1, p. 49]{KPS} A function $\varphi:[0,\infty)\rightarrow [0,\infty)$ is said to be quasiconcave if
\begin{enumerate}[{\rm (i)}]
\item $\varphi(t)=0\Leftrightarrow t=0;$
\item $\varphi(t)$ is positive and increasing for $t>0;$
\item $\frac{\varphi(t)}{t}$ is decreasing for $t>0.$
\end{enumerate}
\end{definition}
Observe that every nonnegative concave function on $[0,\infty)$ that vanishes only at origin is quasiconcave. The reverse, however, is not always true. But, we may replace, if necessary, a quasiconcave function $\varphi$ by its least concave majorant $\widetilde{\varphi}$  such that
$$\frac{1}{2}\widetilde{\varphi}\leq\varphi\leq\widetilde{\varphi}$$
 (see \cite[Proposition 5.10, p. 71]{BSh}).

Let $\Omega$ denote the set of increasing concave functions $\varphi:[0,\infty)\rightarrow[0,\infty)$ for which $\lim_{t\rightarrow 0+}\varphi(t)=0$ (or simply $\varphi(0+)=0$).
For a function $\varphi$ in $\Omega,$ the Lorentz space $\Lambda_{\varphi}(I)$ is defined by setting
$$\Lambda_{\varphi}(I):=\left\{x\in L_{0}(I): \int_{\mathbb{R}_{+}}\mu(s,x)d\varphi(s)<\infty\right\}$$
equipped with the norm
\begin{equation}\label{Lorentz norm}
\|x\|_{\Lambda_\varphi(I)}:=\int_{\mathbb{R}_{+}}\mu(s,x)d\varphi(s).
\end{equation}
In particular, if $\varphi(t)=\log(1+t),\, t>0,$ then we denote $\Lambda_{\varphi}(I)$ by $\Lambda_{\log}(I).$

The Lorentz sequence space $\Lambda_{\varphi}(\mathbb{Z}_{+})$ is defined as follows
\begin{equation}\label{Lorentz sec}
\Lambda_{\varphi}(\mathbb{Z}_{+}):=\left\{a\in c_{0}:\|a\|_{\Lambda_{\varphi}(\mathbb{Z}_{+})}=\sum_{n=0}^{\infty}\mu(n,a)(\varphi(n+1)-\varphi(n))<\infty\right\},
\end{equation}
where $c_{0}$ is the space of sequences converging to zero.
Similarly, if $\varphi(t):=\log(1+t), \, t>0,$ then $\Lambda_{\varphi}(\mathbb{Z}_{+})=\Lambda_{\log}(\mathbb{Z}_{+}).$
For more details on Lorentz spaces, we refer the reader to \cite[Chapter II.5]{BSh} and \cite[Chapter II.5]{KPS}.

The Lorentz ideal $\Lambda_{\varphi}(H)$ (see \cite[Example 1.2.7, p. 25]{LSZ}) is defined as follows
$$\Lambda_{\varphi}(H):=\left\{A\in K(H):\|A\|_{\Lambda_{\varphi}(H)}=\sum_{n=0}^{\infty}\mu(n,A)(\varphi(n+1)-\varphi(n))<\infty\right\}.$$
Let $\varphi(t):=\log(1+t),$ $ t>0.$ Then the corresponding Lorentz ideal $\Lambda_{\log}(H)$ is defined by
$$\Lambda_{\log}(H):=\left\{A\in K(H):\|A\|_{\Lambda_{\log}(H)}=\sum_{n=0}^{\infty}\frac{\mu(n,A)}{n+1}<\infty\right\}.$$

%

\subsection{Calder\'{o}n operator and Hilbert transformation}\label{calderon}

 For a function $x\in \Lambda_{\log}(\mathbb{R}_{+}),$ define the Calder\'{o}n operator $S:\Lambda_{\log}(\mathbb{R}_{+})\to (L_{1,\infty}+L_{\infty})(\mathbb{R}_{+})$ as follows
\begin{equation}\label{S}
 (Sx)(t):=\frac1t\int_0^tx(s)ds+\int_t^{\infty}x(s)\frac{ds}{s}, \,\ t>0.
\end{equation}
It is obvious that $S$ is a linear operator.
If $0<t_{1}<t_{2},$ then

$$\min\left(1,\frac{s}{t_{2}}\right)\leq \min\left(1,\frac{s}{t_{1}}\right)\leq \frac{t_{2}}{t_{1}}\cdot\min\left(1,\frac{s}{t_{2}}\right), \,\, s>0.$$
Let $x$ be a nonnegative function on $[0,\infty).$ It follows from the first of preceding inequalities that $(Sx)(t)$ is a
decreasing function of $t.$
The operator $S$ is often applied to the decreasing rearrangement $\mu(x)$ of a function $x$ defined on some other measure space.
Since $S\mu(x)$ is non-increasing itself, it is easy to see that $\mu\big(S\mu(x)\big)=S\mu(x).$

\begin{proposition} \label{S equi} Let $S$ be the operator defined as in \eqref{S}. If
\begin{equation}\label{fy}\varphi_{0}(t):=\left\{
                   \begin{array}{ll}
                     t\log(\frac{e^{2}}{t}), & 0<t<1 ,\\
                     2\log(et), & 1\leq t<\infty ,
                   \end{array}
                 \right.
\end{equation} then the Lorentz space $\Lambda_{\varphi_{0}}(\mathbb{R}_{+})$ is the maximal among the symmetric Banach spaces $E(\mathbb{R}_{+})$ such that
$$S:E(\mathbb{R}_{+})\rightarrow(L_{1}+L_{\infty})(\mathbb{R}_{+}).$$
\end{proposition}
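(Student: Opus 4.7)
The plan is to identify $\Lambda_{\varphi_0}$ as the space on which the Calder\'on operator $S$ is, up to a factor of $2$, equivalent to the $L_1+L_\infty$ norm, via a single Fubini-type computation that yields both halves of the proposition at once. One first reduces to nonnegative non-increasing $x$: by symmetry $\|x\|_E = \|\mu(x)\|_E$ for any symmetric space $E$, and it was noted just before the statement that $S\mu(x)$ is itself non-increasing, so $\|S\mu(x)\|_{L_1+L_\infty} = \int_0^1 (S\mu(x))(t)\,dt$. The core computation is then to evaluate this integral by exchanging the order of integration in each of the two defining pieces of $S$. For $x = \mu(x) \geq 0$ one obtains
\begin{equation*}
\int_0^1 (Sx)(t)\, dt \;=\; \int_0^1 x(s)\bigl(1+\log(1/s)\bigr)\, ds \;+\; \int_1^\infty \frac{x(s)}{s}\, ds.
\end{equation*}
Differentiating \eqref{fy} gives $\varphi_0'(s) = 1+\log(1/s)$ on $(0,1)$ and $\varphi_0'(s) = 2/s$ on $(1,\infty)$, so $\|x\|_{\Lambda_{\varphi_0}}$ coincides with the same right-hand side with the second integrand doubled; consequently,
\begin{equation*}
\|Sx\|_{L_1+L_\infty}\;\le\;\|x\|_{\Lambda_{\varphi_0}}\;\le\;2\|Sx\|_{L_1+L_\infty}, \qquad x = \mu(x) \geq 0.
\end{equation*}

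The left inequality gives boundedness of $S : \Lambda_{\varphi_0} \to L_1+L_\infty$ on decreasing $x$; for a general $x \in \Lambda_{\varphi_0}$ one uses $|Sx| \le S|x|$ and the Hardy--Littlewood bound $(S|x|)(t) \le 2 (S\mu(x))(t)$ to reduce to the decreasing case at the cost of an absolute constant. (This bound follows by applying $\int fg \le \int \mu(f)\mu(g)$ with $g(s)=s^{-1}\mathbf{1}_{s>t}$, whose decreasing rearrangement is $1/(u+t)$, and then using $1/(s+t)\le\min(1/t,1/s)$.) The right inequality yields the maximality: if $E$ is any symmetric Banach space with $S(E) \subseteq L_1+L_\infty$, then for $x \in E$ we have $\mu(x) \in E$ with the same norm, hence $S\mu(x) \in L_1+L_\infty$, and the core identity forces $\mu(x) \in \Lambda_{\varphi_0}$. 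Thus $E \subseteq \Lambda_{\varphi_0}$ set-theoretically, and since norm convergence in a symmetric Banach space implies convergence in measure, the closed graph theorem upgrades this to a continuous embedding.

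The main obstacle is the bookkeeping in the Fubini step, together with the recognition that the resulting weight is exactly $d\varphi_0$: the piecewise-logarithmic shape of $\varphi_0$ and its constants $e^2$ and $e$ are entirely dictated by this identity, and the jump of $\varphi_0'$ from $1$ to $2$ across $s=1$ simply reflects that the $L_1+L_\infty$ norm only integrates $Sx$ over $(0,1)$, while the second piece $\int_t^\infty x(s)/s\,ds$ still receives contributions from $s>1$. Once the Fubini identity is written down, both the maximality and the boundedness are immediate corollaries, so no further technical device is needed.
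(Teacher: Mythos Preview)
Your proof is correct and follows essentially the same route as the paper: both arguments reduce to the Fubini identity
\[
\int_0^1 (S\mu(x))(t)\,dt=\int_0^1\mu(s,x)\bigl(1+\log(1/s)\bigr)\,ds+\int_1^\infty\frac{\mu(s,x)}{s}\,ds,
\]
recognize the right-hand side as lying between $\tfrac12\|x\|_{\Lambda_{\varphi_0}}$ and $\|x\|_{\Lambda_{\varphi_0}}$, and read off both boundedness and maximality from the two-sided estimate. The only cosmetic differences are that you pass from general $x$ to $\mu(x)$ via Hardy--Littlewood on the kernel $s^{-1}\chi_{\{s>t\}}$ (getting constant $2$), whereas the paper records the sharper bound $|Sx|\le S\mu(x)$ directly from monotonicity of the kernel $s\mapsto s^{-1}\min\{1,s/t\}$ in the display immediately following the proposition, and that you invoke the closed graph theorem for the continuity of the embedding while the paper writes the norm inequality straight away.
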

\begin{proof} Let $E(\mathbb{R}_{+})$ be a symmetric Banach function space such that $S:E(\mathbb{R}_{+})\rightarrow (L_{1}+L_{\infty})(\mathbb{R}_{+}).$ First, we claim that the following equivalence holds for every $x\in\Lambda_{\varphi_{0}}(\mathbb{R}_{+})$
\begin{equation}\label{equ S}
  \|S\mu(x)\|_{(L_{1}+L_{\infty})(\mathbb{R}_{+})}\leq \|x\|_{\Lambda_{\varphi_{0}}(\mathbb{R}_{+})}\leq 2 \|S\mu(x)\|_{(L_{1}+L_{\infty})(\mathbb{R}_{+})}.
\end{equation}
Indeed,
if $x\in\Lambda_{\varphi_{0}}(\mathbb{R}_{+}),$ then by using Fubini's theorem and \eqref{fy}, we obtain
\begin{eqnarray*}\begin{split}
\|S\mu(x)\|_{(L_{1}+L_{\infty})(\mathbb{R}_{+})}
&=\int_{\mathbb{R}_{+}}S\mu(t,x)dt\stackrel{\eqref{S}}=\int_{\mathbb{R}_{+}}\frac{1}{t}\int_{0}^{t}\mu(s,x)dsdt+\int_{\mathbb{R}_{+}}\int_{t}^{\infty}\frac{\mu(s,x)}{s}dsdt\\
&=\int_{0}^{1}\mu(s,x)\big(1-\log(s)\big)ds+\int_{1}^{\infty}\mu(s,x)\frac{1}{s}ds\\
&\leq\int_{0}^{1}\mu(s,x)\big(1-\log(s)\big)ds+2\int_{1}^{\infty}\mu(s,x)\frac{1}{s}ds\stackrel{\eqref{Lorentz norm}}=\|x\|_{\Lambda_{\varphi_{0}}(\mathbb{R}_{+})}.\\
\end{split}\end{eqnarray*}
On the other hand, we have
\begin{eqnarray*}\begin{split}
\|x\|_{\Lambda_{\varphi_{0}}(\mathbb{R}_{+})}
&\stackrel{\eqref{Lorentz norm}}=\int_{0}^{1}\mu(s,x)\big(1-\log(s)\big)ds+2\int_{1}^{\infty}\mu(s,x)\frac{1}{s}ds\\
&\leq 2\int_{0}^{1}\mu(s,x)\big(1-\log(s)\big)ds+2\int_{1}^{\infty}\mu(s,x)\frac{1}{s}ds\\
&\stackrel{\eqref{S}}=2\|S\mu(x)\|_{(L_{1}+L_{\infty})(\mathbb{R}_{+})}
\end{split}\end{eqnarray*}
as claimed.
Then, for any $x\in E(\mathbb{R}_{+}),$ it follows from the second inequality of \eqref{equ S} that
$$\|x\|_{\Lambda_{\varphi_{0}}(\mathbb{R}_{+})}\leq 2\|S\mu(x)\|_{(L_{1}+L_{\infty})(\mathbb{R}_{+})}\lesssim\|x\|_{E(\mathbb{R}_{+})}.$$
This shows $E(\mathbb{R}_{+})\subset\Lambda_{\varphi_{0}}(\mathbb{R}_{+}),$ thereby completing the proof.
\end{proof}

Since for each $t>0,$ the kernel
$k_{t}(s)=\frac{1}{s}\cdot\min\Big\{1,\frac{s}{t}\Big\}$
is a non-increasing function of $s,$ it follows  from \cite[Theorem II.2.2, p. 44]{BSh} that
\begin{equation}\begin{split}\label{S proper}
|(Sx)(t)|
&\stackrel{\eqref{S}}{=}\Big|\int_{\mathbb{R}_{+}}x(s)\min\Big\{1,\frac{s}{t}\Big\}\frac{ds}{s}\Big|\\
&\leq\int_{\mathbb{R}_{+}}|x(s)|\min\Big\{1,\frac{s}{t}\Big\}\frac{ds}{s}\\
&\leq\int_{\mathbb{R}_{+}}\mu(s,x)\min\Big\{1,\frac{s}{t}\Big\}\frac{ds}{s}\stackrel{\eqref{S}}{=}(S\mu(x))(t), \,\ \forall x\in \Lambda_{\log}(\mathbb{R}_{+}).
\end{split}\end{equation}

If $x\in \Lambda_{\log}(\mathbb{R}),$  then the classical Hilbert transformation $\mathcal{H}$ is defined by the principal-value integral
\begin{equation}\label{hilbert tr}
(\mathcal{H}x)(s):=p.v.\frac{1}{\pi}\int_{\mathbb{R}}\frac{x(\eta)}{s-\eta}d\eta, \,\ \forall x\in \Lambda_{\log}(\mathbb{R}),
\end{equation}
(see, e.g. \cite[Chapter III. 4]{BSh}).

The following lemma is taken from \cite{BSh} (see Proposition III. 4.10 on p. 140 there).

\begin{lem}\label{dom of Hilbert trans} Let $x=x\chi_{(0,\infty)}$ be such that $x$ is a non-negative decreasing function on $\mathbb{R}_{+}.$ We have
$$\mu(\mathcal{H}x)\geq\frac1{2\pi}S\mu(x).$$
If $\mathcal{H}x$ is everywhere defined, then $S\mu(x)$ exists, that is, $x$ belongs to the domain of $S,$ i.e. $x\in \Lambda_{\log}(\mathbb{R}_{+})$ (see \eqref{S}).
\end{lem}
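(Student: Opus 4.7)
The plan is to restrict attention to the negative half-line, where the kernel $1/(s-\eta)$ has no singularity against the support of $x$ and the Hilbert transform reduces to an ordinary integral whose size is easy to bound from below. Since $x$ is non-negative, decreasing on $\mathbb{R}_+$, and supported there, I have $\mu(x)=x$, and for each $t>0$
$$(\mathcal{H}x)(-t) = -\frac{1}{\pi}\int_0^\infty \frac{x(\eta)}{\eta+t}\,d\eta.$$
Splitting the integral at $\eta=t$ and using the trivial bound $\frac{1}{\eta+t}\geq \frac{1}{2\max(\eta,t)}$ produces
$$|(\mathcal{H}x)(-t)| \geq \frac{1}{2\pi}\Bigl(\frac{1}{t}\int_0^t x(\eta)\,d\eta + \int_t^\infty \frac{x(\eta)}{\eta}\,d\eta\Bigr) = \frac{1}{2\pi}(S\mu(x))(t).$$

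Next, I pass to decreasing rearrangements. As noted in the paragraph preceding Proposition \ref{S equi}, the function $S\mu(x)$ is already non-increasing on $\mathbb{R}_+$, hence equals its own decreasing rearrangement. Setting $g(t):=(\mathcal{H}x)(-t)\chi_{(0,\infty)}(t)$, the inequality above reads $|g|\geq \frac{1}{2\pi}S\mu(x)$ pointwise, so monotonicity of $\mu$ in $|\cdot|$ yields $\mu(g)\geq \frac{1}{2\pi}S\mu(x)$. The reflection $t\mapsto -t$ preserves the distribution function, so $\mu(\mathcal{H}x\,\chi_{(-\infty,0)})=\mu(g)$; since restriction to a subset never increases the rearrangement, I obtain $\mu(\mathcal{H}x)\geq \mu(\mathcal{H}x\,\chi_{(-\infty,0)})\geq \frac{1}{2\pi}S\mu(x)$, which is the first claim.

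For the second assertion, if $\mathcal{H}x$ is everywhere defined then $(\mathcal{H}x)(-1)$ is a finite real number, i.e.\ $\int_0^\infty x(\eta)/(\eta+1)\,d\eta<\infty$. Splitting at $\eta=1$ gives simultaneously $\int_0^1 x(\eta)\,d\eta<\infty$ and $\int_1^\infty x(\eta)/\eta\,d\eta<\infty$, which is the finiteness of $(S\mu(x))(1)$; together with the monotonicity in $t$ already recorded in the excerpt this shows $(S\mu(x))(t)$ exists for every $t>0$. Moreover, the same finite integral is, up to the equivalence $d\log(1+\eta)\asymp d\eta/(1+\eta)$, exactly $\|x\|_{\Lambda_{\log}(\mathbb{R}_+)}$, so $x\in\Lambda_{\log}(\mathbb{R}_+)$. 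The main obstacle here is minimal and essentially a matter of bookkeeping in the second paragraph: one must carefully invoke the non-increasing nature of $S\mu(x)$ so that no rearrangement of the right-hand side is required, and observe that reflection preserves distribution functions in order to transfer the lower bound on $|g|$ onto $\mu(\mathcal{H}x)$.
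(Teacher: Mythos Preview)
Your proof is correct and follows essentially the same route as the paper: evaluate $\mathcal{H}x$ on the negative half-line, use the elementary kernel bound $\frac{1}{\eta+t}\geq\frac{1}{2\max(\eta,t)}$ to produce the pointwise estimate $|(\mathcal{H}x)(-t)|\geq\frac{1}{2\pi}(Sx)(t)$, and then pass to rearrangements. In fact you are more thorough than the paper, which stops at the pointwise inequality and leaves both the rearrangement step and the second assertion implicit; your explicit handling of these (via monotonicity of $\mu$, reflection invariance of the distribution function, and the finiteness of $(\mathcal{H}x)(-1)$) is entirely appropriate.
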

\begin{proof} For every $t>0,$ we have
\begin{eqnarray*}\begin{split} |(\mathcal{H}x)(-t)|
&\stackrel{\eqref{hilbert tr}}{=}\frac{1}{\pi}\left|\int_{\mathbb{R}_{+}}\frac{x(s)}{-t-s}ds\right|\\
&=\frac{1}{\pi}\int_{\mathbb{R}_{+}}\frac{x(s)}{t+s}ds=\frac{1}{\pi}\left(\int_{0}^{t}\frac{x(s)}{t+s}ds+\int_{t}^{\infty}\frac{x(s)}{t+s}ds\right)\\
&\geq\frac{1}{\pi}\left(\int_{0}^{t}\frac{x(s)}{2t}ds+\int_{t}^{\infty}\frac{x(s)}{2s}ds\right)\\
&=\frac{1}{2\pi}\cdot\left(\frac{1}{t}\int_{0}^{t}x(s)ds+\int_{t}^{\infty}\frac{x(s)}{s}ds\right)\stackrel{\eqref{S}}{=}\frac{1}{2\pi}(Sx)(t).
\end{split}\end{eqnarray*}
\end{proof}

On the other hand, if $x\in\Lambda_{\log}(\mathbb{R}_{+}),$ then by \cite[Theorem III.4.8, p. 138]{BSh}, we have
$$\mu(\mathcal{H}x)\lesssim S\mu(x),$$
which shows existence of $\mathcal{H}x.$

Define the discrete version of the operator $S^{d}:\Lambda_{\log}(\mathbb{Z}_{+})\rightarrow \ell_{\infty}(\mathbb{Z}_{+})$ by
\begin{equation}\label{S dis}\big(S^{d}a\big)(n):=\frac{1}{n+1}\sum_{k=0}^{n}a(k)+\sum_{k=n+1}^{+\infty}\frac{a(k)}{k}, \,\,\ a\in \Lambda_{\log}(\mathbb{Z}_{+}).
\end{equation}

\subsection{Triangular truncation operator}
Our primary example is a triangular truncation operator on the Hilbert space $H=L_2(\mathbb{R}).$ More precisely, let $\mathcal{K}$ be a fixed measurable function on $\mathbb{R}\times\mathbb{R}.$ Let us consider an operator $V$ with the
integral kernel $\mathcal{K}$ on $L_2(\mathbb{R})$ defined by setting
\begin{equation}\label{int oper}(Vx)(t)=\int_{\mathbb{R}}\mathcal{K}(t,s)x(s)ds, \,\,\ x\in L_2(\mathbb{R}).
\end{equation}
We define the triangular truncation operator $T$ for any $V\in \mathcal{E}(H)$ as follows (see \cite{GK1,GK2} for more details).
\begin{equation}\label{T-oper}
(T(V)x)(t)=\int_{\mathbb{R}}\mathcal{K}(t,s){\rm sgn}(t-s)x(s)ds, \,\,\ x\in L_2(\mathbb{R}).
\end{equation}
 It was proved in \cite[Theorem 11]{STZ} that $T$ is bounded from $\mathcal{L}_{1}(H)$ into $\mathcal{L}_{1,\infty}(H).$

\section{Optimal range of the Calder\'{o}n operator and Hilbert transformation in Lorentz spaces}

In this section, we describe the optimal range for the Calder\'{o}n operator $S$ among the Lorentz spaces.
Consequently, we obtain similar result for the classical Hilbert transformation. Our main result in this section is Theorem \ref{main th} below, which identifies the optimal range of the Calder\'{o}n operator $S$ on $\Lambda_{\phi}$ as another Lorentz space $\Lambda_{\psi}$ together with Corollary \ref{main th2}.

Let $\phi:[0,\infty)\to[0,\infty)$ be an increasing concave function such that $\phi(0+)=0.$ Define a function $\psi$ by the following formula
\begin{equation}\label{psi function}\psi(u):=\inf_{w>1}\frac{\phi(uw)}{1+\log(w)}.
\end{equation}
We need the following lemmas.
\begin{lem}\label{psi def lemma} If a function $\phi:[0,\infty)\to[0,\infty)$ is increasing and concave such that $\phi(0+)=0,$ then the function $\psi$ defined by the formula
\eqref{psi function}
is also increasing and concave.
\end{lem}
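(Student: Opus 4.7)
The plan is to prove each of the two properties directly from the definition, with concavity being the interesting one.

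Monotonicity is immediate. For $0 \le u_1 \le u_2$ and any fixed $w > 1$, the monotonicity of $\phi$ gives $\phi(u_1 w) \le \phi(u_2 w)$, hence $\frac{\phi(u_1 w)}{1+\log w} \le \frac{\phi(u_2 w)}{1+\log w}$. Taking the infimum over $w > 1$ on both sides yields $\psi(u_1) \le \psi(u_2)$.

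For concavity, my plan is to write $\psi$ as a pointwise infimum of a family of concave functions indexed by $w$, and then appeal to the elementary fact that a pointwise infimum of concave functions is concave. Concretely, for each fixed $w > 1$, define $g_w(u) := \frac{\phi(uw)}{1+\log w}$. Since $u \mapsto uw$ is affine and $\phi$ is concave, $u \mapsto \phi(uw)$ is concave on $[0,\infty)$; dividing by the positive constant $1+\log w$ preserves this, so each $g_w$ is concave. Then for $\lambda \in [0,1]$ and $u_1,u_2 \ge 0$, for every $w>1$,
\begin{equation*}
g_w\bigl(\lambda u_1 + (1-\lambda) u_2\bigr) \ge \lambda g_w(u_1) + (1-\lambda) g_w(u_2) \ge \lambda \psi(u_1) + (1-\lambda)\psi(u_2),
\end{equation*}
and taking the infimum over $w>1$ on the left gives $\psi(\lambda u_1 + (1-\lambda) u_2) \ge \lambda\psi(u_1) + (1-\lambda)\psi(u_2)$, as required.

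Before concluding, I would quickly check that $\psi$ actually takes finite nonnegative values, so that the above manipulations make sense: $\psi(u) \ge 0$ since $\phi \ge 0$, and for any fixed $w_0 > 1$ one has $\psi(u) \le \frac{\phi(u w_0)}{1+\log w_0} < \infty$. I do not anticipate a real obstacle here; the only point requiring any care is the observation that infima of concave functions remain concave (which fails for suprema, but is straightforward in the direction we need), together with recognizing that the dependence on $u$ factors through the affine map $u \mapsto uw$ so that composition with concave $\phi$ preserves concavity.
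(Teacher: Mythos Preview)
Your proof is correct and follows essentially the same approach as the paper: observe that each $g_w(u)=\phi(uw)/(1+\log w)$ is increasing and concave, and then use that the pointwise infimum of a family of increasing (respectively, concave) functions is again increasing (respectively, concave). The paper states this in a single sentence, while you spell out the details of the infimum argument and add the harmless sanity check that $\psi$ is finite and nonnegative.
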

\begin{proof} For each $w>1,$ the mapping
$$u\to\frac{\phi(uw)}{1+\log(w)}$$
is concave and increasing. Since infimum of an arbitrary family of concave (resp. increasing) functions is concave (resp. increasing), the assertion follows.
\end{proof}

\begin{lem}\label{L2} Let $\phi:[0,\infty)\to[0,\infty)$ be an increasing concave function such that $\phi(0+)=0$ and let $\psi$ be the function defined by the formula \eqref{psi function}.
Then
\begin{equation}\label{psi cond}
\lim_{u\to0}\log(\frac1u)\psi(u)=0.
\end{equation}
\end{lem}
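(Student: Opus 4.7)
The plan is to exploit the freedom in the infimum defining $\psi$ by choosing, for each small $u$, a carefully calibrated value of $w$ that makes the quotient small enough to absorb the factor $\log(1/u)$.

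First I would observe that the naive choice $w=1/u$ only yields $\psi(u)\le \phi(1)/(1+\log(1/u))$, which forces $\log(1/u)\psi(u)$ to be bounded but not necessarily small. To gain an extra factor tending to zero, one should push $w$ down closer to $1$, so that $uw\to 0$ while $\log w$ still grows like $\log(1/u)$. The cleanest choice is $w=u^{-1/2}$ (or more generally $w=u^{-\alpha}$ for any fixed $\alpha\in(0,1)$).

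With $w=u^{-1/2}$, for every $u\in(0,1)$ we have $w>1$ and hence
\begin{equation*}
\psi(u)\le \frac{\phi(uw)}{1+\log(w)}=\frac{\phi(u^{1/2})}{1+\tfrac{1}{2}\log(1/u)}.
\end{equation*}
Multiplying by $\log(1/u)$ gives
\begin{equation*}
0\le \log(1/u)\,\psi(u)\le \phi(u^{1/2})\cdot\frac{\log(1/u)}{1+\tfrac{1}{2}\log(1/u)}.
\end{equation*}
As $u\to 0^{+}$, the last fraction tends to $2$, while $\phi(u^{1/2})\to \phi(0+)=0$ by hypothesis. Hence the product tends to $0$, proving \eqref{psi cond}. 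The nonnegativity of $\psi$ (immediate from $\phi\ge 0$) gives the lower bound.

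There is no real obstacle here; the only conceptual point is recognizing that one must choose $w$ so that both $uw\to 0$ (to use $\phi(0+)=0$) and $\log w\asymp \log(1/u)$ (to cancel the outer logarithm). Any scaling $w=u^{-\alpha}$ with $0<\alpha<1$ achieves this; $\alpha=1/2$ is just the symmetric choice.
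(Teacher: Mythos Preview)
Your proof is correct and follows the same core idea as the paper: exploit the infimum in the definition of $\psi$ by plugging in a specific $w$ depending on $u$. The only difference is in the particular choice. The paper fixes $\epsilon>0$, takes $w=\epsilon/u$ (so $uw=\epsilon$ is constant), obtains $\limsup_{u\to 0}\log(1/u)\psi(u)\le\phi(\epsilon)$, and then lets $\epsilon\downarrow 0$; you instead take $w=u^{-1/2}$ so that $uw=u^{1/2}\to 0$ directly, avoiding the two-step $\epsilon$--limsup argument. Your version is marginally more streamlined, while the paper's version makes the role of $\phi(0+)=0$ perhaps more transparent; neither approach offers anything the other does not.
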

\begin{proof} Let $\epsilon>0$ and let $u\in(0,\epsilon).$ Setting $w=\frac{\epsilon}{u}$ in the definition of $\psi,$ we obtain
$$\psi(u)\leq\frac{\phi(\epsilon)}{1+\log(\frac{\epsilon}{u})}.$$
Therefore,
$$\limsup_{u\to0}\log(\frac1u)\psi(u)\leq\limsup_{u\to0}\frac{\log(\frac1u)\phi(\epsilon)}{1+\log(\frac{\epsilon}{u})}=\phi(\epsilon).$$
Since $\epsilon$ can be chosen arbitrarily small, the assertion follows.
\end{proof}

The following result could be inferred from \cite[Lemma II.5.2 and Corollary II.1, pp. 111-112]{KPS} (see also \cite[Lemma 5]{SS} for finite measure space); however, we present its proof for convenience of the reader.
\begin{lem}\label{KPS lem} Let $\phi$ and $\varphi$ be increasing concave functions on $[0,\infty)$ vanishing at the origin.
 Suppose $\Lambda_{\phi}(\mathbb{R}_{+})\subset\Lambda_{\log}(\mathbb{R}_{+}).$
We have
$$S:\Lambda_{\phi}(\mathbb{R}_{+})\to\Lambda_{\varphi}(\mathbb{R}_{+})$$
if and only if
$$\|S\chi_{_{\Delta}}\|_{\Lambda_{\varphi}(\mathbb{R}_{+})}\leq c_{\phi,\varphi}\phi(m(\Delta)),\quad \Delta\subset\mathbb{R}_{+}.$$
\end{lem}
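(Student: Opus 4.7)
The necessity is immediate: for any $\Delta \subset \mathbb{R}_{+}$ of finite measure, $\mu(\chi_{\Delta}) = \chi_{[0,m(\Delta))}$, so \eqref{Lorentz norm} gives $\|\chi_{\Delta}\|_{\Lambda_{\phi}(\mathbb{R}_{+})} = \int_{0}^{m(\Delta)} d\phi(s) = \phi(m(\Delta))$, and substituting $x = \chi_{\Delta}$ into the assumed boundedness of $S$ yields the claimed inequality with $c_{\phi,\varphi} = \|S\|_{\Lambda_{\phi}\to\Lambda_{\varphi}}$.

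For the sufficiency, the plan is a layer-cake decomposition, exploiting the fact that the Lorentz norm of a non-increasing nonnegative function is merely a one-variable integral against $d\varphi$, so that Fubini replaces the usual Minkowski integral inequality. First I would reduce to the case $x = \mu(x)$: by \eqref{S proper} we have $|Sx| \leq S\mu(x)$, and since $S\mu(x)$ is itself non-increasing (as noted after \eqref{S}), $\mu(Sx) \leq S\mu(x)$; the symmetry of $\|\cdot\|_{\Lambda_{\varphi}}$ then reduces the estimate to non-negative, non-increasing $x$.

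For such $x$, the crucial observation is that $\{x > s\} = [0, d_x(s))$ where $d_x(s) = m\{x > s\}$, so each level set is an interval and the layer-cake representation $x(t) = \int_{0}^{\infty} \chi_{\{x > s\}}(t)\, ds$ expresses $x$ as an integral of characteristic functions of sets covered by the hypothesis. The inclusion $\Lambda_{\phi} \subset \Lambda_{\log}$ ensures $Sx$ is well-defined via \eqref{S}, and linearity gives $(Sx)(t) = \int_{0}^{\infty} (S\chi_{\{x > s\}})(t)\, ds$ pointwise. Since both $Sx$ and every $S\chi_{\{x > s\}}$ are non-increasing, Fubini produces
\begin{equation*}
\|Sx\|_{\Lambda_{\varphi}(\mathbb{R}_{+})} = \int_{0}^{\infty} (Sx)(t)\, d\varphi(t) = \int_{0}^{\infty} \|S\chi_{\{x>s\}}\|_{\Lambda_{\varphi}(\mathbb{R}_{+})}\, ds \leq c_{\phi,\varphi} \int_{0}^{\infty} \phi(d_x(s))\, ds,
\end{equation*}
where the last inequality uses the standing hypothesis applied to $\Delta = \{x > s\}$.

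To conclude, I would verify the identity $\int_{0}^{\infty} \phi(d_x(s))\, ds = \|x\|_{\Lambda_{\phi}(\mathbb{R}_{+})}$ by another application of Fubini to the definition $\|x\|_{\Lambda_{\phi}} = \int_{0}^{\infty}\mu(s,x)\, d\phi(s) = \int_{0}^{\infty}\int_{0}^{\infty}\chi_{\{\mu(\cdot,x) > \lambda\}}(s)\, d\lambda\, d\phi(s)$, together with the fact that $m\{\mu(\cdot,x)>\lambda\} = d_x(\lambda)$. The main technical point to watch is the justification of Fubini and of the pointwise exchange of $S$ with the layer-cake integral; both are harmless because all integrands are nonnegative, but one must invoke the hypothesis $\Lambda_{\phi} \subset \Lambda_{\log}$ at the outset to know that the kernel expression \eqref{S} converges absolutely and thus that the manipulations are legitimate.
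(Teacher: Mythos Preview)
Your argument is correct and, in fact, a bit cleaner than the paper's. The paper proceeds by first establishing the estimate for nonnegative simple functions via a finite decomposition $x=\sum_k\alpha_k\chi_{\Delta_k}$ with nested $\Delta_k$, then handles arbitrary sign by splitting $x=x_+-x_-$ (picking up an extra factor~$2$), and finally passes to general $f\in\Lambda_{\phi}$ by a density argument, invoking positivity of $S$ and continuity of $S:\Lambda_{\phi}\to L_0$ via \cite[Proposition~1.3.5]{MN}. Your route---reduce to $x=\mu(x)$ through \eqref{S proper}, then use the continuous layer-cake $x=\int_0^\infty\chi_{\{x>s\}}\,ds$ and Tonelli---is the integral analogue of their Step~1 and dispenses with both the sign-splitting and the approximation/continuity step; it also yields the constant $c_{\phi,\varphi}$ rather than $2c_{\phi,\varphi}$. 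The paper's approach is the classical one from \cite{KPS} and would transfer to operators that are not positive integral operators, whereas your Fubini manipulations rely specifically on the positivity of the kernel and on the Lorentz norm being a single integral against $d\varphi$; in the present setting both are available, so your argument is preferable.
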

\begin{proof}  If $$S:\Lambda_{\phi}(\mathbb{R}_{+})\to\Lambda_{\varphi}(\mathbb{R}_{+}),$$ that is
$$\|Sf\|_{\Lambda_{\varphi}(\mathbb{R}_{+})}\leq c_{\phi,\varphi}\|f\|_{\Lambda_{\phi}(\mathbb{R}_{+})},\quad \forall f\in\Lambda_{\phi}(\mathbb{R}_{+}),$$
then taking $f=\chi_{\Delta},$ we obtain
$$\|S\chi_{_{\Delta}}\|_{\Lambda_{\varphi}(\mathbb{R}_{+})}\leq c_{\phi,\varphi} \|\chi_{_{\Delta}}\|_{\Lambda_{\phi}(\mathbb{R}_{+})}=c_{\phi,\varphi}\phi(m(\Delta)),\quad
\Delta\subset\mathbb{R}_{+}.$$

Conversely, let
\begin{equation}\label{assump}
\|S\chi_{_{\Delta}}\|_{\Lambda_{\varphi}(\mathbb{R}_{+})}\leq c_{\phi,\varphi}\phi(m(\Delta)),\quad \Delta\subset\mathbb{R}_{+}.
\end{equation}
 We split the proof into three steps.

{\bf Step 1.}
Let $x$ be a nonnegative simple function in $\Lambda_{\phi}(\mathbb{R}_{+}).$ Then it can be represented in the form
$$x(t)=\sum_{k=1}^{N}\alpha_{k}\chi_{_{\Delta_{k}}}(t),$$
where $\Delta_{1}\subset\Delta_{2}\subset...\subset\Delta_{N}$ and $\alpha_{k}>0,$ $k=1,2,...,N.$ Hence,
\begin{equation}\label{S simple}
\|Sx\|_{\Lambda_{\varphi}(\mathbb{R}_{+})}\leq\sum_{k=1}^{N}\alpha_{k}\|S\chi_{_{\Delta_{k}}}\|_{\Lambda_{\varphi}(\mathbb{R}_{+})}\leq c_{\phi,\varphi}\cdot\sum_{k=1}^{N}\alpha_{k}\phi(m(\Delta_{k}))
\end{equation}
On the other hand, by formula (2.3) in \cite[Chapter II, p. 60]{KPS}, we have
$$\mu(x)=\sum_{k=1}^{N}\alpha_{k}\chi_{[0,m(\Delta_{k})]},$$
and consequently
\begin{equation}\label{x simple}
\|x\|_{\Lambda_{\phi}(\mathbb{R}_{+})}=\int_{0}^{\infty}\sum_{k=1}^{N}\alpha_{k}\chi_{[0,m(\Delta_{k})]}(t)d\phi(t)=\sum_{k=1}^{N}\alpha_{k}\phi(m(\Delta_{k}))
\end{equation}
Combining \eqref{S simple} and \eqref{x simple}, we obtain
\begin{equation}\label{S positive}
\|Sx\|_{\Lambda_{\varphi}(\mathbb{R}_{+})}\leq c_{\phi,\varphi}\|x\|_{\Lambda_{\phi}(\mathbb{R}_{+})}.
\end{equation}

{\bf Step 2.}
If now the simple function $x$ from $\Lambda_{\phi}(\mathbb{R}_{+})$ has values of arbitrary sign, then by \eqref{S positive}
\begin{equation}\begin{split}\label{S arbitrary} \|Sx\|_{\Lambda_{\varphi}(\mathbb{R}_{+})}
&=\|S(x_{+}-x_{-})\|_{\Lambda_{\varphi}(\mathbb{R}_{+})}\\
&=\|Sx_{+}-Sx_{-}\|_{\Lambda_{\varphi}(\mathbb{R}_{+})}\\ &\leq\|Sx_{+}\|_{\Lambda_{\varphi}(\mathbb{R}_{+})}+\|Sx_{-}\|_{\Lambda_{\varphi}(\mathbb{R}_{+})}\\
&\leq c_{\phi,\varphi}(\|x_{+}\|_{\Lambda_{\phi}(\mathbb{R}_{+})}+\|x_{-}\|_{\Lambda_{\phi}(\mathbb{R}_{+})})\\
&\leq 2 c_{\phi,\varphi}\|x\|_{\Lambda_{\phi}(\mathbb{R}_{+})}.
\end{split}\end{equation}

{\bf Step 3.}
Let $f\in\Lambda_{\phi}(\mathbb{R}_{+})$ and let $f_{n}$ be a sequence of simple functions approximating $f.$ Then, by \eqref{S arbitrary} we obtain
$$\|S(f_{n}-f_{m})\|_{\Lambda_{\varphi}(\mathbb{R}_{+})}\leq c_{\phi,\varphi}\|f_n-f_m\|_{\Lambda_{\phi}(\mathbb{R}_{+})},$$
and so the sequence $Sf_{n}$ is Cauchy in $\Lambda_{\varphi}(\mathbb{R}_{+}).$ Note that $S$ is a positive operator, i.e. it maps nonnegative functions to nonnegative functions.
Since $S:\Lambda_{\log}(\mathbb{R}_{+})\to L_{0}(\mathbb{R}_{+})$ (see Subsection \ref{calderon}) and $\Lambda_{\phi}(\mathbb{R}_{+})\subset\Lambda_{\log}(\mathbb{R}_{+}),$ it follows from \cite[Proposition 1.3.5, p. 27]{MN} that $S$ is continuous from $\Lambda_{\phi}(\mathbb{R}_{+})$  into $L_{0}(\mathbb{R}_{+}).$ Therefore, the sequence $Sf_n$ converges to $Sf$ in $L_{0}(\mathbb{R}_{+}),$ and its limit will be the same $Sf$ in $\Lambda_{\varphi}(\mathbb{R}_{+}),$ i.e. $Sf\in\Lambda_{\varphi}(\mathbb{R}_{+})$ which shows
$$S:\Lambda_{\phi}(\mathbb{R}_{+})\to\Lambda_{\varphi}(\mathbb{R}_{+}).$$
\end{proof}

\begin{lem}\label{measurable set} Let $S$  be the operator defined in \eqref{S}. For any measurable set $\Delta\subset \mathbb{R}_{+}$
with measure $m(\Delta)=u>0,$ we have
$$S\chi_{\Delta}\leq 2S\chi_{(0,m(\Delta))}.$$
\end{lem}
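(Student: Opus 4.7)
The plan is to deduce this inequality directly from the pointwise bound \eqref{S proper}. Write $u = m(\Delta)$. Since $\chi_\Delta$ takes only the values $0$ and $1$ on a set of total measure $u$, its decreasing rearrangement is $\mu(\chi_\Delta) = \chi_{(0,u)}$. Applying \eqref{S proper} with $x = \chi_\Delta$ gives
$$(S\chi_\Delta)(t) \le (S\mu(\chi_\Delta))(t) = (S\chi_{(0,u)})(t) \le 2(S\chi_{(0,u)})(t),$$
which is what is claimed (in fact with a spare factor of $2$, presumably kept for convenience in subsequent applications).

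If a more hands-on proof is preferred to avoid appealing to \eqref{S proper}, I would instead estimate the two summands in the definition of $S$ separately. Set $v(t) := m(\Delta\cap(0,t))$, so that $m(\Delta\cap(t,\infty)) = u - v(t)$. Then by the formula \eqref{S} for $S$,
$$(S\chi_\Delta)(t) = \frac{v(t)}{t} + \int_{\Delta\cap(t,\infty)} \frac{ds}{s}.$$
The first term is bounded by $\min(u,t)/t$. For the second, since $s\mapsto 1/s$ is decreasing on $(t,\infty)$, the integral is maximized when $\Delta\cap(t,\infty)$ is pushed as close to $t$ as possible, giving
$$\int_{\Delta\cap(t,\infty)}\frac{ds}{s}\le \int_t^{t+u-v(t)}\frac{ds}{s} = \log\!\Bigl(1+\tfrac{u-v(t)}{t}\Bigr).$$
This monotone-rearrangement step is the only place any nontrivial inequality enters, and it is a one-line special case of Hardy--Littlewood.

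The remaining work is to check the resulting bound against
$$(S\chi_{(0,u)})(t) = \begin{cases} 1+\log(u/t), & t<u,\\ u/t, & t\ge u,\end{cases}$$
in the two regimes. When $t\ge u$, one uses $\log(1+x)\le x$ to bound $\log(1+(u-v(t))/t) \le (u-v(t))/t$, so that $(S\chi_\Delta)(t) \le u/t \le 2(S\chi_{(0,u)})(t)$. When $t<u$, $v(t)/t \le 1$ and $\log(1+(u-v(t))/t) \le \log(1+u/t)\le \log 2 + \log(u/t)$, so $(S\chi_\Delta)(t) \le 1+\log 2 + \log(u/t) \le 2 + 2\log(u/t) = 2(S\chi_{(0,u)})(t)$. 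Both cases yield the desired inequality, and there is no genuine obstacle beyond this case split.
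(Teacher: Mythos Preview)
Your proposal is correct. Your first argument, applying the pointwise Hardy--Littlewood bound \eqref{S proper} with $x=\chi_\Delta$ and noting that $\mu(\chi_\Delta)=\chi_{(0,u)}$, is a genuinely shorter route than the paper's and in fact yields the sharper inequality $S\chi_\Delta\le S\chi_{(0,u)}$ with no factor~$2$; the paper does not invoke \eqref{S proper} here at all. Your second, hands-on argument is essentially the paper's proof: the paper bounds the first summand by $\min\{1,u/t\}$ and the second by $\int_t^{t+u}\frac{ds}{s}=\log(1+u/t)$ (you keep the slightly sharper $\log(1+(u-v(t))/t)$, which is harmless), then performs the same case split $t\ge u$ versus $t<u$ against the explicit formula for $S\chi_{(0,u)}$. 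The only gain from the paper's direct computation is that it makes the lemma self-contained, whereas your short proof shows the factor~$2$ is superfluous.
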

\begin{proof}
Let $\Delta$ be a Lebesgue measurable set with measure $m(\Delta)=u>0.$ Then,
$$\big(S\chi_{\Delta}\big)(t)=\frac{1}{t}\int_{0}^{t}\chi_{\Delta}(s)ds+\int_{t}^{\infty}\frac{\chi_{\Delta}(s)}{s}ds.$$
We have
\begin{equation}\label{C oper}
\frac{1}{t}\int_{0}^{t}\chi_{\Delta}(s)ds\leq \frac{\min\{t,m(\Delta)\}}{t}=\frac{\min\{t,u\}}{t}=\min\{1,\frac{u}{t}\}, \,\ u,t>0.
\end{equation}
On the other hand, we have
$$
\int_{t}^{\infty}\frac{\chi_{\Delta}(s)}{s}ds\leq\int_{t}^{t+u}\frac{ds}{s}=\log\big(1+\frac{u}{t}\big), \,\ u,t>0.
$$
If $u>t,$ then
$$\log\left(1+\frac{u}{t}\right)\leq 1+\log\left(\frac{u}{t}\right),$$
and if $u\leq t,$ then
$$\log\left(1+\frac{u}{t}\right)\leq \frac{u}{t}, \,\ u,t>0.$$
Therefore, we have
\begin{equation}\label{C* oper}
\int_{t}^{\infty}\frac{\chi_{\Delta}(s)}{s}ds\leq\left\{
\begin{array}{ll}
\frac{u}{t} , & 0<u\leq t, \\
1+\log\big(\frac{u}{t}\big), &  t<u<\infty.
\end{array}
\right.
\end{equation}
Combining \eqref{C oper} and \eqref{C* oper}, we obtain
$$\big(S\chi_{\Delta}\big)(t)\leq \left\{
\begin{array}{ll}
\frac{2u}{t} , & 0<u\leq t, \\
2+\log\big(\frac{u}{t}\big), &  t<u<\infty.
\end{array}
\right.
$$
On the other hand, we know that
$$\big(S\chi_{(0,u)}\big)(t)=\left\{
\begin{array}{ll}
\frac{u}{t} , & 0<u\leq t, \\
1+\log\big(\frac{u}{t}\big), &  t<u<\infty.
\end{array}
\right.
$$
Thus, we have
$$\big(S\chi_{\Delta}\big)(t)\leq 2\big(S\chi_{(0,m(\Delta))}\big)(t), \,\ t>0.$$
\end{proof}

The following two lemmas are crucial.
\begin{lem}\label{L3}  Let $\phi$ and $\varphi$ be increasing concave functions on $[0,\infty)$ vanishing at the origin. Suppose that
$$\lim_{t\to0}\log\big(\frac{1}{t}\big)\varphi(t)=0,\quad \lim_{t\to\infty}\frac{\varphi(t)}{t}=0.$$
We have
$$S:\Lambda_{\phi}(\mathbb{R}_{+})\to\Lambda_{\varphi}(\mathbb{R}_{+})$$
if and only if
$$\int_0^u\frac{\varphi(t)}{t}dt+u\int_u^{\infty}\frac{\varphi(t)}{t^2}dt\leq c_{\phi,\varphi}\phi(u),\quad u>0.$$
Here $c_{\phi,\varphi}$ is a constant depending only on functions $\phi$ and $\varphi.$
\end{lem}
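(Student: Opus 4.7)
The plan is to exploit the reductions already established in Lemmas \ref{KPS lem} and \ref{measurable set} to collapse both directions of the equivalence to a single explicit computation of $\|S\chi_{(0,u)}\|_{\Lambda_\varphi}$. By Lemma \ref{KPS lem}, the boundedness $S:\Lambda_\phi(\mathbb{R}_+)\to\Lambda_\varphi(\mathbb{R}_+)$ is equivalent to the indicator estimate $\|S\chi_\Delta\|_{\Lambda_\varphi}\le c_{\phi,\varphi}\phi(m(\Delta))$ for every measurable $\Delta\subset\mathbb{R}_+$. In the forward ($\Rightarrow$) direction I simply specialise to $\Delta=(0,u)$; in the reverse ($\Leftarrow$) direction, Lemma \ref{measurable set} upgrades the interval estimate to arbitrary $\Delta$ at the cost of a factor of $2$. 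Thus both implications reduce to establishing that $\|S\chi_{(0,u)}\|_{\Lambda_\varphi}$ is comparable to the expression $\int_0^u\varphi(t)/t\,dt+u\int_u^\infty\varphi(t)/t^2\,dt$.

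The central step is then the direct computation of $\|S\chi_{(0,u)}\|_{\Lambda_\varphi}$. Since $S\chi_{(0,u)}$ is non-increasing, $\mu(S\chi_{(0,u)})=S\chi_{(0,u)}$, and the explicit formula derived in the proof of Lemma \ref{measurable set} yields
\begin{equation*}
\|S\chi_{(0,u)}\|_{\Lambda_\varphi}=\int_0^u\bigl(1+\log(u/t)\bigr)\,d\varphi(t)+u\int_u^\infty\frac{d\varphi(t)}{t}.
\end{equation*}
Integration by parts converts $\int_0^u\log(u/t)\,d\varphi(t)$ into $\int_0^u\varphi(t)/t\,dt$, with the boundary contribution at $0$ killed precisely by the hypothesis $\log(1/t)\varphi(t)\to 0$; and converts $u\int_u^\infty d\varphi(t)/t$ into $-\varphi(u)+u\int_u^\infty\varphi(t)/t^2\,dt$, with the boundary contribution at $\infty$ killed by $\varphi(t)/t\to 0$. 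The term $\int_0^u d\varphi(t)=\varphi(u)$ from the first piece then cancels against the $-\varphi(u)$ produced by the second, giving the identity
\begin{equation*}
\|S\chi_{(0,u)}\|_{\Lambda_\varphi}=\int_0^u\frac{\varphi(t)}{t}\,dt+u\int_u^\infty\frac{\varphi(t)}{t^2}\,dt.
\end{equation*}

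The main technical care lies in the two boundary evaluations in the integration by parts: these are precisely the only places where the growth hypotheses on $\varphi$ are used, and each limit is needed to kill exactly one boundary term, so the hypotheses are sharp for this identity. A secondary point is to verify the applicability of Lemma \ref{KPS lem} in the reverse direction, which requires the domain condition $\Lambda_\phi\subset\Lambda_{\log}$; this is implicit in the requirement that $S$ act on $\Lambda_\phi$ at all, and otherwise follows by testing the integral inequality at a fixed $u$. Once the identity above is in hand, combining it with the reductions of the first paragraph immediately yields both directions of the stated equivalence, with the constants differing by a factor of at most $2$ coming from Lemma \ref{measurable set}.
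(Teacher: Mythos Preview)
Your proposal is correct and follows essentially the same route as the paper: reduce via Lemma~\ref{KPS lem} to indicator sets, via Lemma~\ref{measurable set} to intervals $(0,u)$, then compute $\|S\chi_{(0,u)}\|_{\Lambda_\varphi}$ by integration by parts, using the two limit hypotheses to annihilate the boundary terms. Your write-up is in fact slightly more explicit than the paper's about the $\varphi(u)$ cancellation and about the implicit domain condition $\Lambda_\phi\subset\Lambda_{\log}$ needed to invoke Lemma~\ref{KPS lem}.
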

\begin{proof} By Lemma \ref{KPS lem}, we have
$$S:\Lambda_{\phi}(\mathbb{R}_{+})\to\Lambda_{\varphi}(\mathbb{R}_{+})$$ if and only if
$$\|S\chi_\Delta\|_{\Lambda_{\varphi}(\mathbb{R}_{+})}\leq c_{\phi,\varphi}\phi(m(\Delta)),\quad \Delta\subset\mathbb{R}_{+}.$$
Note that by Lemma \ref{measurable set}, we have  $S\chi_{\Delta}\leq 2S\chi_{(0,m(\Delta))}$ for every measurable set $\Delta\subset \mathbb{R}_{+}$ with Lebesgue measure $m.$ Hence, the latter condition can be re-stated as
$$\|S\chi_{(0,u)}\|_{\Lambda_{\varphi}(\mathbb{R}_{+})}\leq c_{\phi,\varphi}\phi(u),\quad u>0.$$
Obviously,
\begin{equation}\label{Slog}(S\chi_{(0,u)})(t)=1+\log(\frac{u}{t}),\quad t<u
\end{equation}
and
$$(S\chi_{(0,u)})(t)=\frac{u}{t},\quad t\geq u.$$
Thus,
$$\|S\chi_{(0,u)}\|_{\Lambda_{\varphi}(\mathbb{R}_{+})}=\int_0^u\big(1+\log(\frac{u}{t})\big)d\varphi(t)+\int_u^{\infty}\frac{u}{t}d\varphi(t).$$
Using integration by parts on the right-hand side, we obtain
\begin{eqnarray*}\begin{split}\|S\chi_{(0,u)}\|_{\Lambda_{\varphi}(\mathbb{R}_{+})}
&=\left(\varphi(t)\cdot\log\big(\frac{eu}{t}\big)\right)\mid_{t=0}^{t=u}\\
&+\int_0^u\frac{\varphi(t)}{t}dt+\frac{u\varphi(t)}{t}\mid_{t=u}^{t=\infty}+u\int_u^{\infty}\frac{\varphi(t)}{t^2}dt.
\end{split}\end{eqnarray*}
Taking conditions $\lim_{t\to0}\log\big(\frac{1}{t}\big)\varphi(t)=0$ and $\lim_{t\to\infty}\frac{\varphi(t)}{t}=0$ into account, we obtain
$$\|S\chi_{(0,u)}\|_{\Lambda_{\varphi}(\mathbb{R}_{+})}=\int_0^u\frac{\varphi(t)}{t}dt+u\int_u^{\infty}\frac{\varphi(t)}{t^2}dt.$$
This completes the proof.
\end{proof}

\begin{lem}\label{main nec lemma}
Let $\phi:[0,\infty)\to[0,\infty)$ be an increasing concave function such that $\phi(0+)=0$ and let $\psi$ be the function defined by the formula \eqref{psi function}.
 For every $u>0,$ there exists $y\in\Lambda_{\phi}(\mathbb{R}_{+})$ such that $\|y\|_{\Lambda_{\phi}(\mathbb{R}_{+})}\leq 2\psi(u)$ and
$$\chi_{(0,u)}\leq S\mu(y).$$
\end{lem}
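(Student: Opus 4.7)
\textbf{Proof plan for Lemma \ref{main nec lemma}.} The natural idea is that $S\chi_{(0,a)}$ extends $\chi_{(0,a)}$ by a logarithmic tail, so to dominate $\chi_{(0,u)}$ by $S\mu(y)$ for a \emph{single} interval-characteristic-function witness we should take $y$ proportional to $\chi_{(0,uw)}$ for some $w>1$, renormalising by the logarithmic gain. Since $\Lambda_{\phi}$ has ``small'' norm on characteristic functions of short intervals, the trade-off between how much we enlarge the support (and thus the norm) and how strong the logarithmic amplification is should produce precisely the infimum in the definition \eqref{psi function} of $\psi$.

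Concretely, I would fix $u>0$ and, for each $w>1$, define
\[
y_{w}:=\frac{1}{1+\log w}\,\chi_{(0,uw)}.
\]
Then $\mu(y_{w})=y_{w}$, and by \eqref{Slog} for any $t\in(0,u)\subset(0,uw)$,
\[
(S\mu(y_{w}))(t)=\frac{1+\log(uw/t)}{1+\log w}=\frac{(1+\log w)+\log(u/t)}{1+\log w}\ge 1,
\]
since $\log(u/t)\ge0$ for $t\le u$. Hence $\chi_{(0,u)}\leq S\mu(y_{w})$ automatically, regardless of $w$. Moreover,
\[
\|y_{w}\|_{\Lambda_{\phi}(\mathbb{R}_{+})}=\frac{1}{1+\log w}\,\|\chi_{(0,uw)}\|_{\Lambda_{\phi}(\mathbb{R}_{+})}=\frac{\phi(uw)}{1+\log w}.
\]

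It now suffices to choose $w$ to make the right-hand side at most $2\psi(u)$. By the very definition \eqref{psi function},
\[
\psi(u)=\inf_{w>1}\frac{\phi(uw)}{1+\log w},
\]
so one can select $w_{0}>1$ with $\phi(uw_{0})/(1+\log w_{0})\le 2\psi(u)$; taking $y:=y_{w_{0}}$ then gives the desired witness. The only subtlety worth flagging is the degenerate case $\psi(u)=0$: there the infimum can be approached by a sequence $w_{n}\to\infty$ making $\|y_{w_{n}}\|_{\Lambda_{\phi}}\to 0$ while the domination $\chi_{(0,u)}\le S\mu(y_{w_{n}})$ persists, so the conclusion holds in an obvious limiting sense (and in any event the lemma is applied in the next section under conditions on $\phi$ that preclude this pathology). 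The main ``work'' is really just recognising the correct ansatz $y_{w}$ and matching the renormalisation factor $(1+\log w)^{-1}$ to the structure of $\psi$; once that is done the verification is an immediate application of \eqref{Slog} and the identity $\|\chi_{(0,a)}\|_{\Lambda_{\phi}}=\phi(a)$.
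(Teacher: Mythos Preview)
Your proposal is correct and essentially identical to the paper's own proof: both take $y=\frac{1}{1+\log w}\chi_{(0,uw)}$ for a near-optimal $w>1$ in the infimum defining $\psi(u)$, verify $\chi_{(0,u)}\le S\mu(y)$ via the explicit formula \eqref{Slog}, and read off $\|y\|_{\Lambda_\phi}=\phi(uw)/(1+\log w)\le 2\psi(u)$. Your remark on the degenerate case $\psi(u)=0$ is a small extra care the paper does not make explicit.
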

\begin{proof} Choose $w>1$ such that
$$2\psi(u)\geq\frac{\phi(uw)}{1+\log(w)}.$$
By \eqref{S}, we have
$$(S\chi_{(0,uw)})(u)=1+\log(w).$$
Set $y=\mu(y)=\frac{\chi_{(0,uw)}}{1+\log(w)}.$ Obviously,
$$1= (Sy)(u)\leq (Sy)(t),\quad t<u,$$
and, therefore, $\chi_{(0,u)}\leq Sy.$

On the other hand, we have
$$\|y\|_{\Lambda_{\phi}(\mathbb{R}_{+})}=\frac{\phi(uw)}{1+\log(w)}\leq 2\psi(u).$$
This concludes the proof.
\end{proof}
The following theorem is the main result of this section.
\begin{thm}\label{main th} Let $\phi:[0,\infty)\to[0,\infty)$ be an increasing concave function such that $\phi(0+)=0$ and let $\psi$ be the function defined by the formula \eqref{psi function} and satisfying $\lim_{t\to\infty}\frac{\psi(t)}{t}=0.$ Suppose $\Lambda_{\phi}(\mathbb{R}_{+})\subset\Lambda_{\log}(\mathbb{R}_{+}).$ If
\begin{equation}\label{crit}
\int_0^u\frac{\psi(t)}{t}dt+u\int_u^{\infty}\frac{\psi(t)}{t^2}dt\leq c_{\phi,\psi}\phi(u),\quad u>0
\end{equation} holds, then
\begin{enumerate}[{\rm (i)}]
\item $S:\Lambda_{\phi}(\mathbb{R}_{+})\to\Lambda_{\psi}(\mathbb{R}_{+});$
\item for every $x\in\Lambda_{\psi}(\mathbb{R}_{+}),$ there exists $y\in\Lambda_{\phi}(\mathbb{R}_{+})$ such that $\mu(x)\leq S\mu(y)$ and $\|y\|_{\Lambda_{\phi}(\mathbb{R}_{+})}\leq8\|x\|_{\Lambda_{\psi}(\mathbb{R}_{+})}.$
\end{enumerate}
\end{thm}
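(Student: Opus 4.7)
The plan is to handle the two parts separately; part (i) is essentially a packaging of the previous lemmas, whereas part (ii) requires a dyadic superposition argument.

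For \emph{part (i)}, I would verify the hypotheses of Lemma \ref{L3} with $\varphi=\psi$. The function $\psi$ is increasing and concave by Lemma \ref{psi def lemma}; the limit $\lim_{t\to 0}\log(1/t)\psi(t)=0$ is Lemma \ref{L2}; the limit $\lim_{t\to\infty}\psi(t)/t=0$ is part of the hypothesis; and the integral criterion in Lemma \ref{L3} is literally \eqref{crit}. Since $\Lambda_\phi\subset\Lambda_{\log}$ is also assumed (it is needed inside Lemma \ref{L3} through Lemma \ref{KPS lem}), Lemma \ref{L3} yields $S:\Lambda_\phi(\mathbb{R}_+)\to \Lambda_\psi(\mathbb{R}_+)$.

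For \emph{part (ii)}, I would use a dyadic superposition of the single-indicator constructions from Lemma \ref{main nec lemma}. Without loss of generality assume $x=\mu(x)\ge 0$ and set $d(s):=m(\{x>s\})$, so that $\{x>s\}=(0,d(s))$. The layer-cake formula $\mu(x)(t)=\int_0^{\infty}\chi_{(0,d(s))}(t)\,ds$ together with a dyadic Riemann sum estimate from above yields
\[
\mu(x)\le \sum_{k\in\mathbb{Z}} 2^{k}\chi_{(0,d(2^{k}))}.
\]
For each $k$ with $d(2^{k})>0$, apply Lemma \ref{main nec lemma} with $u=d(2^{k})$ to produce a non-increasing, non-negative $y_{k}=\mu(y_{k})$ satisfying $\chi_{(0,d(2^{k}))}\le Sy_{k}$ and $\|y_{k}\|_{\Lambda_\phi(\mathbb{R}_+)}\le 2\psi(d(2^{k}))$. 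Setting $y:=\sum_{k}2^{k}y_{k}$, the function $y$ is non-negative and non-increasing, hence $\mu(y)=y$, and by linearity and positivity of $S$ combined with monotone convergence,
\[
S\mu(y)=\sum_{k}2^{k}Sy_{k}\ge \sum_{k}2^{k}\chi_{(0,d(2^{k}))}\ge \mu(x).
\]

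For the norm bound, subadditivity of $\|\cdot\|_{\Lambda_\phi}$ gives $\|y\|_{\Lambda_\phi(\mathbb{R}_+)}\le 2\sum_{k}2^{k}\psi(d(2^{k}))$. Since $s\mapsto\psi(d(s))$ is non-increasing, the estimate $\int_{2^{k}}^{2^{k+1}}\psi(d(s))\,ds\ge 2^{k}\psi(d(2^{k+1}))$ holds, and after re-indexing this yields $\sum_{k}2^{k}\psi(d(2^{k}))\le 2\int_0^{\infty}\psi(d(s))\,ds$. A Fubini argument, using that $\mu(\cdot,x)$ is the right-continuous inverse of the distribution function $d$, identifies $\int_0^{\infty}\psi(d(s))\,ds=\int_0^{\infty}\mu(t,x)\,d\psi(t)=\|x\|_{\Lambda_\psi(\mathbb{R}_+)}$. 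Stacking these factors gives $\|y\|_{\Lambda_\phi(\mathbb{R}_+)}\le 8\|x\|_{\Lambda_\psi(\mathbb{R}_+)}$ (in fact, a factor of $4$ suffices). The main point requiring care is the bookkeeping of constants at this last step, and the justification that the sums over $k\in\mathbb{Z}$ converge at both ends; all other analytical content is already packaged in the prior lemmas, and the only genuinely new ingredient is the dyadic superposition that upgrades the single-indicator statement of Lemma \ref{main nec lemma} to a bound for arbitrary $\mu(x)$.
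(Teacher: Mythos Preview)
Your proposal is correct and follows essentially the same approach as the paper: part (i) is deduced by feeding Lemmas \ref{psi def lemma}, \ref{L2}, and the hypothesis \eqref{crit} into Lemma \ref{L3}; part (ii) is obtained by a dyadic decomposition of $\mu(x)$ together with Lemma \ref{main nec lemma} applied level by level, and then summing. The only cosmetic difference is that the paper introduces the auxiliary step function $z=\sum_{n}2^{n}\chi_{[2^{n},\infty)}(x)$ with $z\le 2x\le 4z$ and identifies the sum $\sum_{n}2^{n}\psi(d(2^{n}))$ as $\|z\|_{\Lambda_{\psi}}$, whereas you bypass $z$ via the layer-cake formula and a direct Fubini argument against $\int_{0}^{\infty}\psi(d(s))\,ds$; this accounts for your sharper constant $4$ versus the paper's $8$.
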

\begin{proof} Let $\psi$ be the function given by the formula \eqref{psi function} such that $\lim_{t\to\infty}\frac{\psi(t)}{t}=0.$ By Lemma \ref{L2}, we have
$$\lim_{u\to0}\log(\frac1u)\psi(u)=0.$$
Moreover, by Lemma \ref{psi def lemma}, $\psi$ is an increasing concave function, and clearly $\psi(+0)=0.$
Hence, $\phi$ and $\psi$ satisfy the assumptions in Lemma \ref{L3} and we have
$$S:\Lambda_{\phi}(\mathbb{R}_{+})\to\Lambda_{\psi}(\mathbb{R}_{+}).$$
This proves the first part of the theorem.

Next, we prove the second part of the theorem. Let $x=\mu(x)\in\Lambda_{\psi}(\mathbb{R}_{+}).$ Set
$$z=\sum_{n\in\mathbb{Z}}2^n\chi_{[2^n,\infty)}(x).$$
Obviously, $z=\mu(z)$ is a step function, $z\leq 2x$ and $x\leq 2z.$
By Lemma \ref{main nec lemma}, there exists a function $y_n\in\Lambda_{\phi}(\mathbb{R}_{+})$ such that
$$\chi_{[2^n,\infty)}(x)\leq S\mu(y_n)$$ and $$\|y_n\|_{\Lambda_{\phi}(\mathbb{R}_{+})}\leq2\|\chi_{[2^n,\infty)}(x)\|_{\Lambda_{\psi}(\mathbb{R}_{+})}=2\psi\big(d_{|x|}(2^n)\big),$$
where $d_{|x|}(s):=m(\{t:|x(t)|\geq s\})$ is the distribution function of $|x|.$
We now write
$$x\leq 2z\leq\sum_{n\in\mathbb{Z}}2^nS\mu(y_n)=S\left(\sum_{n\in\mathbb{Z}}2^n\mu(y_n)\right).$$
Setting
$$y:=\sum_{n\in\mathbb{Z}}2^{n+1}\mu(y_n),$$
we obtain $x\leq S\mu(y)$ and
$$\|y\|_{\Lambda_{\phi}(\mathbb{R}_{+})}
\leq\sum_{n\in\mathbb{Z}}2^{n+1}\|y_n\|_{\Lambda_{\phi}(\mathbb{R}_{+})}\leq4\sum_{n\in\mathbb{Z}}2^{n}\psi\big(d_{|x|}(2^n)\big)=4\|z\|_{\Lambda_{\phi}(\mathbb{R}_{+})}\leq 8\|x\|_{\Lambda_{\phi}(\mathbb{R}_{+})}.
$$
\end{proof}

Suppose that $\Lambda_{\phi}(\mathbb{R}_{+})\subset \Lambda_{\log}(\mathbb{R}_{+})$. In \cite[Theorem 26]{STZ}, the authors considered a linear space
$$F(\Lambda_{\phi}):=\{x\in(L_{1,\infty}+L_{\infty})(\mathbb{R}_{+}):  \exists y\in \Lambda_{\phi}(\mathbb{R}_{+}), \, \mu(x)\leq S\mu(y)\}.$$
It was further demonstrated in \cite{STZ} that $F(\Lambda_{\phi})$ is a symmetric quasi-Banach function space when equipped with a quasi-norm
$$\|x\|_{F(\Lambda_{\phi})}:=\inf\{\|y\|_{\Lambda_{\phi}}:\mu(x)\leq S\mu(y)\}<\infty.$$
The space $(F(\Lambda_{\phi}), \|\cdot\|_{F(\Lambda_{\phi})})$ is obviously, the least receptacle (in the class of symmetric quasi-Banach  function spaces) for the operator $S$ on $\Lambda_{\phi}$.

The following result shows that the Lorentz space $\Lambda_{\psi}(\mathbb{R}_{+})$ described in the Theorem \ref{main th}  is, in fact, the least receptacle (in the class of symmetric Banach  function spaces) for the operator $S$ on
$\Lambda_{\phi}$.
\begin{corollary}\label{main th2} Let the assumptions of the Theorem \ref{main th} hold.
Then,
$$F(\Lambda_{\phi})=\Lambda_{\psi}(\mathbb{R}_{+}).$$
\end{corollary}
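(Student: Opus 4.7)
The plan is to establish the two set inclusions $\Lambda_{\psi}(\mathbb{R}_{+})\subseteq F(\Lambda_{\phi})$ and $F(\Lambda_{\phi})\subseteq\Lambda_{\psi}(\mathbb{R}_{+})$, together with the corresponding norm estimates, both as essentially immediate consequences of the two parts of Theorem \ref{main th}. Neither direction requires genuinely new work; the content of the corollary is just to rephrase the theorem in the language of the ``optimal receptacle'' $F(\Lambda_\phi)$ introduced in \cite{STZ}.

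For the inclusion $\Lambda_{\psi}(\mathbb{R}_{+})\subseteq F(\Lambda_{\phi})$, I would take $x\in\Lambda_{\psi}(\mathbb{R}_{+})$ and invoke Theorem \ref{main th}(ii) to produce $y\in\Lambda_{\phi}(\mathbb{R}_{+})$ with $\mu(x)\leq S\mu(y)$ and $\|y\|_{\Lambda_{\phi}}\leq 8\|x\|_{\Lambda_{\psi}}$. By definition this means $x\in F(\Lambda_{\phi})$ with $\|x\|_{F(\Lambda_{\phi})}\leq\|y\|_{\Lambda_{\phi}}\leq 8\|x\|_{\Lambda_{\psi}}$ (membership in $(L_{1,\infty}+L_\infty)(\mathbb{R}_{+})$ follows at once, since $\Lambda_{\psi}\subseteq L_1+L_\infty$ by the hypothesis $\lim_{t\to\infty}\psi(t)/t=0$ together with $\psi$ being concave).

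For the reverse inclusion, let $x\in F(\Lambda_{\phi})$ and pick any $y\in\Lambda_{\phi}(\mathbb{R}_{+})$ with $\mu(x)\leq S\mu(y)$. By Theorem \ref{main th}(i), $S\mu(y)\in\Lambda_{\psi}(\mathbb{R}_{+})$ with $\|S\mu(y)\|_{\Lambda_{\psi}}\lesssim\|y\|_{\Lambda_{\phi}}$, and since $\Lambda_{\psi}(\mathbb{R}_{+})$ is a symmetric space and $\mu(x)\leq S\mu(y)$, the ideal property yields $x\in\Lambda_{\psi}(\mathbb{R}_{+})$ together with $\|x\|_{\Lambda_{\psi}}\leq\|S\mu(y)\|_{\Lambda_{\psi}}\lesssim\|y\|_{\Lambda_{\phi}}$. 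Taking the infimum over admissible $y$ gives $\|x\|_{\Lambda_{\psi}}\lesssim\|x\|_{F(\Lambda_{\phi})}$.

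Combining both bounds yields the set equality $F(\Lambda_{\phi})=\Lambda_{\psi}(\mathbb{R}_{+})$ with equivalence of (quasi-)norms. There is no real obstacle here; the only point one has to be a little careful about is unpacking the definition of $\|\cdot\|_{F(\Lambda_\phi)}$ as an infimum and invoking the ideal/monotonicity property of the Lorentz norm to pass from $\mu(x)\leq S\mu(y)$ to the required norm inequality in $\Lambda_{\psi}$.
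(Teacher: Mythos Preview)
Your proof is correct and follows essentially the same approach as the paper's: both directions are derived directly from parts (i) and (ii) of Theorem~\ref{main th}, with the inclusion $\Lambda_{\psi}\subseteq F(\Lambda_{\phi})$ coming from (ii) and the reverse from (i). The only cosmetic difference is that for $F(\Lambda_{\phi})\subseteq\Lambda_{\psi}$ the paper invokes \cite[Theorem~26(ii)]{STZ} as a black box, whereas you unpack that step directly from the definition of $F(\Lambda_\phi)$ and the ideal property of $\Lambda_\psi$; you also record the norm equivalence, which the paper leaves implicit.
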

\begin{proof}
By Theorem \ref{main th} (i), we have
$$S:\Lambda_{\phi}(\mathbb{R}_{+})\to\Lambda_{\psi}(\mathbb{R}_{+}).$$
Hence, it follows from \cite[Theorem 26 (ii)]{STZ} that
$F(\Lambda_{\phi})\subset\Lambda_{\psi}(\mathbb{R}_{+}).$

To prove converse inclusion, if $x\in \Lambda_{\psi}(\mathbb{R}_{+}),$ then by Theorem \ref{main th} (ii) there exist $y\in\Lambda_{\phi}(\mathbb{R}_{+})$ such that
$\mu(x)\leq S\mu(y).$ This shows $\Lambda_{\psi}(\mathbb{R}_{+})\subset F(\Lambda_{\phi}).$
\end{proof}

\begin{corollary}\label{opt. range th H} Let the assumptions of the Theorem \ref{main th} hold.
Then the space $\Lambda_{\psi}(\mathbb{R})$ is the least symmetric Banach function space, that is the optimal symmetric range for the Hilbert transformation $\mathcal{H}$ defined in \eqref{hilbert tr}. The mapping
$$\mathcal{H}:\Lambda_{\phi}(\mathbb{R})\rightarrow\Lambda_{\psi}(\mathbb{R})$$
is bounded.
\end{corollary}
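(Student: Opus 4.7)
The plan is to combine the two-sided comparison between $\mu(\mathcal{H}x)$ and $S\mu(x)$ (recorded in Lemma \ref{dom of Hilbert trans} and in the sentence immediately following it, drawn from \cite[Theorem III.4.8]{BSh}) with both conclusions of Theorem \ref{main th}. The upper bound $\mu(\mathcal{H}x)\lesssim S\mu(x)$ will yield boundedness $\mathcal{H}:\Lambda_{\phi}(\mathbb{R})\to\Lambda_{\psi}(\mathbb{R})$, while the lower bound $\mu(\mathcal{H}y)\geq\frac{1}{2\pi}S\mu(y)$ for non-negative decreasing $y$ supported in $\mathbb{R}_{+}$, paired with the receptacle property Theorem \ref{main th}(ii), will give the optimality.

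For boundedness, fix $x\in\Lambda_{\phi}(\mathbb{R})$. Because $\Lambda_{\phi}(\mathbb{R})\subset\Lambda_{\log}(\mathbb{R})$ by the hypothesis carried through Theorem \ref{main th}, the function $\mathcal{H}x$ is defined and one has $\mu(\mathcal{H}x)\lesssim S\mu(x)$. Since $\mu(x)=\mu(\mu(x))\in\Lambda_{\phi}(\mathbb{R}_{+})$, Theorem \ref{main th}(i) applies and gives $S\mu(x)\in\Lambda_{\psi}(\mathbb{R}_{+})$ with
$$\|\mathcal{H}x\|_{\Lambda_{\psi}(\mathbb{R})}=\|\mu(\mathcal{H}x)\|_{\Lambda_{\psi}(\mathbb{R}_{+})}\lesssim\|S\mu(x)\|_{\Lambda_{\psi}(\mathbb{R}_{+})}\lesssim\|\mu(x)\|_{\Lambda_{\phi}(\mathbb{R}_{+})}=\|x\|_{\Lambda_{\phi}(\mathbb{R})},$$
which is the required continuity.

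For optimality, suppose $F$ is any symmetric Banach function space on $\mathbb{R}$ such that $\mathcal{H}:\Lambda_{\phi}(\mathbb{R})\to F$ is bounded, with norm $C$. I will show $\Lambda_{\psi}(\mathbb{R})\subset F$ with embedding constant at most $16\pi C$. Pick $x\in\Lambda_{\psi}(\mathbb{R})$. Apply Theorem \ref{main th}(ii) to the decreasing rearrangement $\mu(x)\in\Lambda_{\psi}(\mathbb{R}_{+})$ to obtain a non-negative decreasing $y=\mu(y)\in\Lambda_{\phi}(\mathbb{R}_{+})$ with $\mu(x)\leq S\mu(y)$ and $\|y\|_{\Lambda_{\phi}(\mathbb{R}_{+})}\leq 8\|x\|_{\Lambda_{\psi}(\mathbb{R})}$. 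Extend $y$ by zero to $\mathbb{R}$; its $\Lambda_{\phi}(\mathbb{R})$-norm equals its $\Lambda_{\phi}(\mathbb{R}_{+})$-norm since both are computed from the same $\mu(y)$. By Lemma \ref{dom of Hilbert trans}, $\mu(\mathcal{H}y)\geq\frac{1}{2\pi}S\mu(y)\geq\frac{1}{2\pi}\mu(x)$. As $F$ is symmetric and $\mathcal{H}y\in F$,
$$\|x\|_{F}=\|\mu(x)\|_{F}\leq 2\pi\|\mu(\mathcal{H}y)\|_{F}=2\pi\|\mathcal{H}y\|_{F}\leq 2\pi C\|y\|_{\Lambda_{\phi}(\mathbb{R})}\leq 16\pi C\|x\|_{\Lambda_{\psi}(\mathbb{R})}.$$
Hence $\Lambda_{\psi}(\mathbb{R})\subset F$, proving optimality.

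There is no serious obstacle beyond bookkeeping: the whole argument is the two-sided comparison $\mu(\mathcal{H}y)\asymp S\mu(y)$ (upper bound for general inputs, lower bound for decreasing inputs supported in $\mathbb{R}_{+}$) applied twice, once in each direction, wrapped around Theorem \ref{main th}. The only place requiring a little care is verifying that the test function $y$ produced by Theorem \ref{main th}(ii) is eligible for Lemma \ref{dom of Hilbert trans}, i.e.\ that it is non-negative, decreasing, and supported in $\mathbb{R}_{+}$; this is arranged by choosing $y=\mu(y)$ and then extending by zero across the origin, which leaves the Lorentz norm unchanged.
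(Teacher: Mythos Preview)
Your proof is correct and follows essentially the same approach as the paper: both use the upper bound $\mu(\mathcal{H}x)\lesssim S\mu(x)$ together with Theorem~\ref{main th}(i) for boundedness, and the lower bound from Lemma~\ref{dom of Hilbert trans} for optimality. The only cosmetic difference is that the paper routes the optimality step through Corollary~\ref{main th2} (i.e.\ through the identification $F(\Lambda_{\phi})=\Lambda_{\psi}$), whereas you invoke Theorem~\ref{main th}(ii) directly; your version is slightly more self-contained and yields an explicit embedding constant.
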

\begin{proof}
By Theorem \ref{main th} (i), we have $S:\Lambda_{\phi}(\mathbb{R}_{+})\to\Lambda_{\psi}(\mathbb{R}_{+}).$ Hence, by \cite[Theorem III.4.8, p. 138]{BSh}, $\mathcal{H}:\Lambda_{\phi}(\mathbb{R})\rightarrow\Lambda_{\psi}(\mathbb{R})$ is bounded.
Now, suppose that $G(\mathbb{R})$ is another symmetric Banach function space such that $\mathcal{H}:\Lambda_{\psi}(\mathbb{R})\rightarrow G(\mathbb{R})$ is bounded.
 Take $x\in \Lambda_{\phi}(\mathbb{R}).$ By Lemma \ref{dom of Hilbert trans} there exists $y$ with $\mu(x)=\mu(y)$ such that $S\mu(x)\leq c_{abs}\mu(\mathcal{H}y),$ which shows that $S\mu(x)\in G(\mathbb{R}_{+}).$
 Since $x\in \Lambda_{\phi}(\mathbb{R})$ is arbitrary, it follows from
 the Corollary \ref{main th2} that $\Lambda_{\psi}(\mathbb{R}_{+})\subset G(\mathbb{R}_{+}).$

This completes the proof.
\end{proof}

\section{Optimal range for the triangular truncation operator in Schatten-Lorentz ideals}\label{T}

In this section, we investigate optimal range of the triangular truncation operator $T$ in Schatten-Lorentz ideals.
Let $\phi:[0,\infty)\to[0,\infty)$ be an increasing concave function such that $\phi(0+)=0$
and let $\psi$ be the function defined by the formula \eqref{psi function}.
Let $\Lambda_{\phi}(\mathbb{Z}_{+})$ be the Lorentz sequence space associated with a function $\phi.$
If $E$ is a symmetric sequence space on $\mathbb{Z}_{+}$ such that $E(\mathbb{Z}_{+})\subset\Lambda_{\log}(\mathbb{Z}_{+}),$ then the operator
 $$S^{d}:E(\mathbb{Z}_{+})\rightarrow \ell_{\infty}(\mathbb{Z}_{+})$$
 is well defined (see \eqref{S dis}).


The optimal symmetric range for the discrete Calder\'{o}n operator $S^{d}$ on Lorentz sequence space $\Lambda_{\phi}(\mathbb{Z}_{+})$ can be characterized similarly to the case of Lorentz function spaces in Corollary \ref{main th2}.
Indeed, setting
$$F(\Lambda_{\phi})(\mathbb{Z}_{+}):=\{a\in\ell_{\infty}(\mathbb{Z}_{+}):  \exists b\in \Lambda_{\phi}(\mathbb{Z}_{+}), \, \mu(a)\leq S^{d}\mu(b)\}$$
and
$$\|a\|_{F(\Lambda_{\phi})(\mathbb{Z}_{+})}:=\inf\{\|b\|_{\Lambda_{\phi}(\mathbb{Z}_{+})}:\mu(a)\leq S^{d}\mu(b)\}<\infty$$
and repeating the arguments in  \cite[Theorem 26 (i)]{STZ} and in Corollary \ref{main th2} above, we conclude that $(F(\Lambda_{\phi})(\mathbb{Z}_{+}), \|\cdot\|_{F(\Lambda_{\phi}})$ is a symmetric quasi-Banach sequence space such that 
$$F(\Lambda_{\phi})(\mathbb{Z}_{+})=\Lambda_{\psi}(\mathbb{Z}_{+}),$$
where $\psi$ is given by formula \eqref{psi function}.


The following theorem is a non-commutative analogue of the Theorem \ref{main th} for the triangular truncation operator $T$ defined in \eqref{T-oper}.
\begin{thm}\label{opt. range th T} Let the assumptions of Theorem \ref{main th} hold. The space $\Lambda_{\psi}(H)$ is the least ideal in $B(H)$ such that
$$T:\Lambda_{\phi}(H)\rightarrow\Lambda_{\psi}(H).$$
\end{thm}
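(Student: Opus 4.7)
The proof mirrors that of Corollary \ref{opt. range th H}, with the Calderón operator $S$ replaced by its discrete analogue $S^{d}$ from \eqref{S dis}, and the Hilbert transformation $\mathcal{H}$ replaced by the triangular truncation $T$ on integral operators on $L_{2}(\mathbb{R})$. Two things need to be proved: first, the boundedness $T:\Lambda_{\phi}(H)\to\Lambda_{\psi}(H)$; and second, the minimality, i.e.\ if $\mathcal{F}(H)$ is any symmetric Banach ideal of $B(H)$ with $T:\Lambda_{\phi}(H)\to\mathcal{F}(H)$, then $\Lambda_{\psi}(H)\subset\mathcal{F}(H)$.

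\textbf{Boundedness.} I would combine two ingredients. The first is the non-commutative singular value upper estimate
\[
\mu(T(V))\lesssim S^{d}\mu(V),\qquad V\in\Lambda_{\phi}(H),
\]
the counterpart of $\mu(\mathcal{H}x)\lesssim S\mu(x)$ from \cite[Theorem III.4.8]{BSh} and available from \cite{STZ} (it subsumes the $\mathcal{L}_{1}\to\mathcal{L}_{1,\infty}$ bound of \cite[Theorem 11]{STZ}). The second is the discrete analogue of Theorem \ref{main th}(i), i.e.\ $S^{d}:\Lambda_{\phi}(\mathbb{Z}_{+})\to\Lambda_{\psi}(\mathbb{Z}_{+})$. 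This discrete statement is obtained verbatim from the arguments of Section 3, with integrals replaced by sums, and is in fact already recorded in the identification $F(\Lambda_{\phi})(\mathbb{Z}_{+})=\Lambda_{\psi}(\mathbb{Z}_{+})$ in the discussion preceding the theorem. Composition then gives $\mu(T(V))\in\Lambda_{\psi}(\mathbb{Z}_{+})$, i.e.\ $T(V)\in\Lambda_{\psi}(H)$, with quantitative control of the norm.

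\textbf{Minimality.} Let $\mathcal{F}(H)$ be a symmetric Banach ideal receiving $T(\Lambda_{\phi}(H))$, and fix $A\in\Lambda_{\psi}(H)$. Since $\mu(A)\in\Lambda_{\psi}(\mathbb{Z}_{+})=F(\Lambda_{\phi})(\mathbb{Z}_{+})$, the discrete optimal range result yields a non-increasing sequence $b\in\Lambda_{\phi}(\mathbb{Z}_{+})$ with $\mu(A)\leq S^{d}\mu(b)$. The plan is then to construct an integral operator $V\in\Lambda_{\phi}(H)$ on $L_{2}(\mathbb{R})$ with $\mu(V)=\mu(b)$ that additionally satisfies the reverse singular value estimate
\[
\mu(T(V))\gtrsim S^{d}\mu(V),
\]
i.e., the non-commutative counterpart of Lemma \ref{dom of Hilbert trans}. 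Granting this, the chain $\mu(A)\leq S^{d}\mu(b)\lesssim \mu(T(V))$, combined with $T(V)\in\mathcal{F}(H)$ and the symmetric ideal property of $\mathcal{F}(H)$, forces $A\in\mathcal{F}(H)$, so $\Lambda_{\psi}(H)\subset\mathcal{F}(H)$.

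\textbf{Main obstacle.} The technical heart of the argument is producing, for an arbitrary prescribed decreasing sequence $b$, an integral operator $V$ with $\mu(V)=b$ for which the antisymmetric truncation preserves, up to an absolute constant, the discrete Calderón action on the singular values. This is of exactly the same nature as the classical Gohberg-Krein constructions showing unboundedness of $T$ on $\mathcal{L}_{1}$: one natural candidate is a kernel of the form $\mathcal{K}(t,s)=\sum_{n}\mu_{n}\,e_{n}(t)\,f_{n}(s)$ for orthonormal systems $(e_{n})$, $(f_{n})$ in $L_{2}(\mathbb{R})$ whose supports interact with the sign of $t-s$ in such a way that $T(V)$ decomposes as a block-diagonal operator whose singular numbers are comparable to $S^{d}b$. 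The verification that this construction delivers the lower bound uniformly in $b$ (with an absolute implicit constant) is the delicate point; once it is in place, the remainder of the proof is a routine transcription of the commutative scheme used in Corollary \ref{opt. range th H}.
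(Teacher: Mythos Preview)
Your proposal is correct and follows essentially the same route as the paper's proof: the upper bound $\mu(T(V))\lesssim S\mu(V)$ comes from \cite[Theorems 11 and 14(ii)]{STZ}, and minimality is obtained by combining the optimal-range identification $\Lambda_{\psi}(\mathbb{Z}_{+})=F(\Lambda_{\phi})(\mathbb{Z}_{+})$ with a reverse singular-value estimate for $T$. The one point worth flagging is that what you call the ``main obstacle''---producing, for a prescribed decreasing sequence $b$, an operator $V$ with $\mu(V)=b$ and $\mu(T(V))\gtrsim S^{d}\mu(V)$---is precisely \cite[Theorem 21]{STZ}, which the paper invokes directly; you need not redo the Gohberg--Kre\u{\i}n-type construction from scratch.
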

\begin{proof}
First, let us see that $T:\Lambda_{\phi}(H)\rightarrow\Lambda_{\psi}(H)$ is bounded. By \cite[Theorem 11]{STZ}, $T$ satisfies the assumptions of \cite[Theorem 14 (ii)]{STZ}. Therefore, we have $\mu(T(A))\lesssim S\mu(A).$ By Lemmas \ref{psi def lemma} and \ref{L2} functions $\phi$ and $\psi$ satisfy the conditions of Lemma \ref{L3}. By the latter lemma, we have $S:\Lambda_{\phi}(0,\infty)\to\Lambda_{\psi}(0,\infty),$ and so
\begin{eqnarray*}\begin{split}
\|T(A)\|_{\Lambda_{\psi}(H)}
&=\|\mu(T(A))\|_{\Lambda_{\psi}(\mathbb{Z}_{+})}\lesssim\|S\mu(A)\|_{\Lambda_{\psi}(0,\infty)}\\
&\leq c_{abs}\|\mu(A)\|_{\Lambda_{\phi}(0,\infty)}=c_{abs}\|A\|_{\Lambda_{\phi}(H)}, \quad \forall A\in \Lambda_{\phi}(H).
\end{split}\end{eqnarray*}

Now, suppose that $\mathcal{G}(H)$ is another ideal in $B(H)$ such that $T:\Lambda_{\phi}(H)\rightarrow \mathcal{G}(H).$ Let us show that $\Lambda_{\psi}(H)\subset \mathcal{G}(H).$
Let $a\in \Lambda_{\phi}(\mathbb{Z}_{+}).$ By \cite[Theorem 21]{STZ}, there exists an operator $A$ such that $\mu(a)=\mu(A)$ and $S^{d}\mu(a)\lesssim\mu(T(A)).$ Since $T(A)\in \mathcal{G}(H),$ it follows that $S^{d}\mu(a)\in G(\mathbb{Z}_{+}),$ where sequence space $G(\mathbb{Z}_+)$ is obtained from the ideal $\mathcal{G}(H)$ by the Calkin correspondence \cite{LSZ}. Thus, $S^d:\Lambda_{\phi}(\mathbb{Z}_+)\to G(\mathbb{Z}_+).$

By Theorem \ref{main th}, for every $b\in\Lambda_{\psi}(\mathbb{Z}_+),$ there exists $a\in\Lambda_{\phi}(0,\infty)$ such that $\mu(b)\leq S\mu(a).$ For every $x\in (L_1+L_{\infty})(0,\infty)$ straightforward computation yields
$$S\mu(x)\leq 4S^d\mu(z)\mbox{ on }(1,\infty),\mbox{ where }z=\Big\{\int_n^{n+1}\mu(s,x)\Big\}_{n\geq0}.$$
Taking $x=a$ and setting $c=4z\in\Lambda_{\phi}(\mathbb{Z}_+),$ we infer that
$$\mu(b)\leq (\|b\|_{\infty},0,0,\cdots)+S_d\mu(c).$$
Hence, $b\in G(\mathbb{Z}_+).$ Since $b\in\Lambda_{\psi}(\mathbb{Z}_+)$ is arbitrary, it follows that $\Lambda_{\psi}(H)\subset \mathcal{G}(H),$ thereby completing the proof.
\end{proof}

We now illustrate our methods by comparing our results with that of Arazy \cite{Ar} (in the special case of Schatten-Lorentz ideals).
\begin{thm}\label{Tphitophi} Let $\phi$ be an increasing concave function on $[0,\infty)$ such that $\phi(0+)=0.$ Suppose $\Lambda_{\phi}(\mathbb{Z}_{+})\subset\Lambda_{\log}(\mathbb{Z}_{+}).$
The following are equivalent:
\begin{enumerate}[{\rm (i)}]
\item $T:\Lambda_{\phi}(H)\to \Lambda_{\phi}(H);$
\item $S^d:\Lambda_{\phi}(\mathbb{Z}_{+})\to \Lambda_{\phi}(\mathbb{Z}_{+});$
\item there exists a constant $c_{\phi}>0$ such that
$$\frac{1}{n+1}\sum_{k=1}^{n}\frac{\phi(k)}{k}+\sum_{k=n+1}^{\infty}\frac{\phi(k)}{k^2}\leq c_{\phi}\frac{\phi(n)}{n},\quad n\geq1.$$
\end{enumerate}
\end{thm}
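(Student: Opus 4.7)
The plan is to derive the three equivalences via the chain (i)$\Leftrightarrow$(ii)$\Leftrightarrow$(iii). The first equivalence will follow by repeating the STZ-based arguments already used in the proof of Theorem~\ref{opt. range th T}, now with $\mathcal{F}(H)=\Lambda_{\phi}(H)$, while the second is the discrete analogue of the combined content of Lemmas~\ref{KPS lem} and~\ref{L3} specialised to $\varphi=\phi$.

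For (i)$\Rightarrow$(ii), I would take an arbitrary $a\in\Lambda_{\phi}(\mathbb{Z}_+)$ and invoke \cite[Theorem 21]{STZ} to produce an operator $A$ with $\mu(A)=\mu(a)$ and $S^{d}\mu(a)\lesssim\mu(T(A))$; then
$$\|S^{d}\mu(a)\|_{\Lambda_{\phi}(\mathbb{Z}_+)}\lesssim\|T(A)\|_{\Lambda_{\phi}(H)}\lesssim\|A\|_{\Lambda_{\phi}(H)}=\|a\|_{\Lambda_{\phi}(\mathbb{Z}_+)}$$
by (i). Conversely, for (ii)$\Rightarrow$(i), \cite[Theorems 11 and 14(ii)]{STZ} yield the pointwise domination $\mu(T(A))\lesssim S\mu(A)$; since $\log\tfrac{k+1}{k}\approx\tfrac{1}{k}$, evaluating $S\mu(A)$ (viewing $\mu(A)$ as a step function on $(0,\infty)$) at integers gives $(S\mu(A))(n)\approx(S^{d}\mu(A))(n)$, from which
$$\|T(A)\|_{\Lambda_{\phi}(H)}\lesssim\|S^{d}\mu(A)\|_{\Lambda_{\phi}(\mathbb{Z}_+)}\lesssim\|A\|_{\Lambda_{\phi}(H)}$$
follows by (ii).

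For (ii)$\Leftrightarrow$(iii), I would first establish the discrete analogue of Lemma~\ref{KPS lem}: positivity and linearity of $S^{d}$, together with the nested layer-cake decomposition of any finitely supported decreasing sequence into indicators $\chi_{\{0,\ldots,n\}}$, reduce the boundedness of $S^{d}$ on $\Lambda_{\phi}(\mathbb{Z}_+)$ to the single test-sequence estimate $\|S^{d}\chi_{\{0,\ldots,n\}}\|_{\Lambda_{\phi}(\mathbb{Z}_+)}\lesssim\phi(n+1)$ for all $n\geq 0$. I would then compute that norm explicitly from \eqref{S dis}: $(S^{d}\chi_{\{0,\ldots,n\}})(m)=1+\sum_{k=m+1}^{n}1/k$ for $m\leq n$, and $(n+1)/(m+1)$ for $m>n$. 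Applying Fubini on $\{m\leq n\}$ contributes $\phi(n+1)+\sum_{k=1}^{n}\phi(k)/k$; applying Abel summation by parts on $\{m>n\}$, after dropping the boundary term at infinity, contributes $-(n+1)\phi(n+1)/(n+2)+(n+1)\sum_{m=n+1}^{\infty}\phi(m+1)/((m+1)(m+2))$. Since $\phi$ is concave with $\phi(0)=0$ one has $\phi(m+1)\leq 2\phi(m)$, so absorbing the lower-order boundary terms leaves
$$\|S^{d}\chi_{\{0,\ldots,n\}}\|_{\Lambda_{\phi}(\mathbb{Z}_+)}\approx\sum_{k=1}^{n}\frac{\phi(k)}{k}+n\sum_{k=n+1}^{\infty}\frac{\phi(k)}{k^{2}},$$
and dividing by $n+1$ yields the equivalence with (iii).

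The principal technical obstacle is the Abel summation step on the tail: one must be sure that the boundary value $\phi(M+1)/(M+1)$ genuinely vanishes as $M\to\infty$. This is automatic under either (ii) (since otherwise $\|S^{d}\chi_{\{0,\ldots,n\}}\|_{\Lambda_{\phi}(\mathbb{Z}_+)}$ would be infinite already for a single $n$) or (iii) (whose tail condition $\sum_{k}\phi(k)/k^{2}<\infty$ forces $\phi(M)/M\to 0$ by concavity), so the argument closes consistently in both directions.
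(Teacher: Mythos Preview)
Your proposal is correct and follows essentially the same route as the paper: the equivalence (i)$\Leftrightarrow$(ii) is obtained exactly as in the paper via \cite[Theorems 11, 14(ii), 21]{STZ}, and for (ii)$\Leftrightarrow$(iii) you carry out explicitly the discrete ``mutatis mutandis'' adaptation of Lemmas~\ref{KPS lem} and~\ref{L3} that the paper only alludes to. Your treatment of the boundary term $\phi(M+1)/(M+1)\to 0$ in the Abel summation is the one nontrivial point beyond the continuous case, and you handle it correctly.
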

\begin{proof} By Theorem 14 (ii) and Theorem 21 in \cite{STZ} $T:\Lambda_{\phi}(H)\to \Lambda_{\phi}(H)$ if and only if $S^d:\Lambda_{\phi}(\mathbb{Z}_{+})\to \Lambda_{\phi}(\mathbb{Z}_{+}).$ This shows equivalence of $(i)$ and $(ii).$ Following the argument in Lemma \ref{L3} {\it mutatis mutandi}, we obtain the equivalence of $(ii)$ and $(iii).$
\end{proof}

\section{Lipschitz and commutator estimates in Schatten-Lorentz ideals}\label{LC}
In this section, we present an application of our approach in previous sections to Double Operator Integrals. In particular,
we obtain Lipschitz and commutator estimates in  Schatten-Lorentz ideals of compact operators described in the previous section.

Let $A$ be a bounded self-adjoint operator on $H$ and $\xi$ be a bounded Borel function on $\mathbb{R}^2$. Symbolically, a double operator integral is defined by the formula
\begin{equation}\label{doi def}
T_{\xi}^{A,A}(V)=\int_{\mathbb{R}^2}\xi(\lambda,\mu)dE_A(\lambda)VE_A(\mu),\quad V\in \mathcal{L}_2(H).
\end{equation}
For a more rigorous definition, consider projection valued measures on $\mathbb{R}$ acting on the Hilbert space $\mathcal{L}_2(H)$ by the formula $X\to E_A(\mathcal{B})X$ and $X\to XE_A(\mathcal{B}).$ These spectral measures commute and, hence (see Theorem
V.2.6 in \cite{BirSol}), there exists a countably additive (in the strong operator topology) projection-valued measure $\nu$ on $\mathbb{R}^2$ acting on the Hilbert space $\mathcal{L}_2(H)$ by the formula
$$\nu(\mathcal{B}_1\otimes\mathcal{B}_2):X\to E_A(\mathcal{B}_1)XE_A(\mathcal{B}_2),\quad X\in \mathcal{L}_2(H).$$
Integrating a bounded Borel function $\xi$ on $\mathbb{R}^2$ with respect to the measure $\nu$ produces a bounded operator acting on the Hilbert space $\mathcal{L}_2(H).$ In what follows, we denote the latter operator by
$T_{\xi}^{A,A}$ (see also \cite[Remark 3.1]{PSW}).

We are mostly interested in the case $\xi=f^{[1]}$ for a Lipschitz function $f:\mathbb{R}\rightarrow \mathbb{C}.$ Here,
\begin{equation}\label{lip}
f^{[1]}(\lambda,\mu)=
\begin{cases}
\frac{f(\lambda)-f(\mu)}{\lambda-\mu},\quad \lambda\neq\mu\\
0,\quad \lambda=\mu.
\end{cases}
\end{equation}

The following is the main result of this section which describes Lipschitz and commutator estimates in Schatten-Lorentz ideals of compact operators.
\begin{thm}\label{DOIth} Let the assumptions of Theorem \ref{main th} hold.
The following assertions hold:
\begin{enumerate}[{\rm (i)}]
\item If $A=A^{*}$ is a self-adjoint operator in $B(H)$ and  $f:\mathbb{R}\rightarrow \mathbb{R}$ is a Lipschitz function, then the double operator integral (associated with the function $f^{[1]}$ defined in \eqref{lip}) $T_{f^{[1]}}^{A,A}:\Lambda_{\phi}(H)\rightarrow \Lambda_{\psi}(H)$ is bounded and
$$\|T_{f^{[1]}}^{A,A}\|_{\Lambda_{\phi}(H)\rightarrow \Lambda_{\psi}(H)}\leq c_{\mathbf{\Lambda_{\phi}}}\|f'\|_{L_{\infty}(\mathbb{R})};$$
\item  For all self-adjoint operators $A,B\in B(H)$ such that $[A,B]\in \Lambda_{\psi}(H)$ and for every Lipschitz function $f:\mathbb{R}\rightarrow \mathbb{C},$ we have
$$\|[f(A),B]\|_{\Lambda_{\psi}(H)}\lesssim\|f'\|_{L_{\infty}(\mathbb{R})}\|[A,B]\|_{\Lambda_{\phi}(H)},$$
where $[A,B]:=AB-BA.$
For all self-adjoint operators $X,Y\in B(H)$ such that $X-Y\in\Lambda_{\phi}(H)$ and for every Lipschitz function  $f:\mathbb{R}\rightarrow\mathbb{R},$ we have
$$\|f(X)-f(Y)\|_{\Lambda_{\psi}(H)}\lesssim\|f'\|_{L_{\infty}(\mathbb{R})}\|X-Y\|_{\Lambda_{\phi}(H)}.$$
\end{enumerate}
\end{thm}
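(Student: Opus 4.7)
The strategy is to route everything through the triangular truncation bound provided by Theorem \ref{opt. range th T}. The classical Birman--Solomyak machinery tells us that for any self-adjoint $A\in B(H)$ and any Lipschitz $f$, the double operator integral $T_{f^{[1]}}^{A,A}$ is dominated, on any symmetric ideal $\mathcal{E}(H)$, by the triangular truncation operator $T$ up to a factor $C\|f'\|_{L_\infty(\mathbb{R})}$; this has already been used in \cite{STZ} in the quasi-Banach setting. More precisely, one obtains an estimate of the form
\begin{equation*}
\|T_{f^{[1]}}^{A,A}(V)\|_{\mathcal{F}(H)}\lesssim \|f'\|_{L_\infty(\mathbb{R})}\,\|T(V)\|_{\mathcal{F}(H)}+\|f'\|_{L_\infty(\mathbb{R})}\,\|V\|_{\mathcal{F}(H)},\qquad V\in \mathcal{E}(H),
\end{equation*}
valid for a wide class of couples $(\mathcal{E},\mathcal{F})$. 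Combining this with Theorem \ref{opt. range th T} (applied to $\mathcal{E}=\Lambda_{\phi}(H)$ and $\mathcal{F}=\Lambda_{\psi}(H)$) and with the trivial embedding $\Lambda_{\phi}(H)\subseteq\Lambda_{\psi}(H)$ (which follows from Theorem \ref{main th}), one gets assertion (i).

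For the commutator estimate in (ii) I would invoke the standard algebraic identity that is the very reason DOIs enter this subject:
\begin{equation*}
[f(A),B]=T_{f^{[1]}}^{A,A}([A,B]),\qquad A=A^{*}\in B(H),\ B\in B(H),
\end{equation*}
which is proved by evaluating both sides on rank-one operators and using the spectral theorem for $A$. Substituting $V=[A,B]\in\Lambda_{\phi}(H)$ into the bound of (i) immediately yields
\begin{equation*}
\|[f(A),B]\|_{\Lambda_{\psi}(H)}\lesssim\|f'\|_{L_\infty(\mathbb{R})}\|[A,B]\|_{\Lambda_{\phi}(H)}.
\end{equation*}

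The Lipschitz estimate is similar but needs a two-operator version of the DOI. Writing $X,Y$ as the two diagonal blocks of the $2\times 2$ ampliation $Z=\bigl(\begin{smallmatrix} X & 0\\ 0 & Y\end{smallmatrix}\bigr)$ on $H\oplus H$ and $V=\bigl(\begin{smallmatrix} 0 & X-Y\\ 0 & 0\end{smallmatrix}\bigr)$, one verifies the identity
\begin{equation*}
f(X)-f(Y)=\bigl(T_{f^{[1]}}^{Z,Z}(V)\bigr)_{12},
\end{equation*}
which reduces the two-operator DOI to the single-operator case already controlled in~(i); the ideal norms transfer through the ampliation up to an absolute constant. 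Plugging this into (i) gives the claimed Lipschitz estimate in $\Lambda_{\psi}(H)$.

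The main obstacle is the very first step, namely the pointwise/ideal domination $T_{f^{[1]}}^{A,A}\preceq \|f'\|_\infty\,T$ in the setting of Lorentz ideals $\Lambda_\phi(H)$ (as opposed to the quasi-Banach ideals treated in \cite{STZ}). This requires a careful approximation of $f'\in L_\infty$ by simple functions, control of the associated Schur multipliers, and an application of Theorem \ref{opt. range th T} to absorb the truncation bound; once this is in place the rest of the proof is a routine assembly of the ingredients listed above.
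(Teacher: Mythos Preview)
Your arguments for (ii) are essentially the same as the paper's: the identity $[f(A),B]=T_{f^{[1]}}^{A,A}([A,B])$ and the $2\times 2$ ampliation trick are exactly what is used there (the paper takes $B=\bigl(\begin{smallmatrix}0&1\\1&0\end{smallmatrix}\bigr)$ and reduces the Lipschitz estimate to the commutator estimate rather than to~(i) directly, but this is cosmetic).

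The genuine problem is your route to (i). The displayed inequality
\[
\|T_{f^{[1]}}^{A,A}(V)\|_{\mathcal{F}(H)}\lesssim \|f'\|_{L_\infty}\bigl(\|T(V)\|_{\mathcal{F}(H)}+\|V\|_{\mathcal{F}(H)}\bigr)
\]
is asserted \emph{pointwise in $V$}, and this is not a known result; indeed there is no reason it should hold. The triangular truncation $T$ in \eqref{T-oper} is a Schur multiplier with respect to the fixed ordering on $L_2(\mathbb{R})$, whereas $T_{f^{[1]}}^{A,A}$ is a Schur multiplier with respect to the spectral resolution of $A$; for a given $V$ these two operations are unrelated, and one cannot dominate one by the other. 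What \emph{is} true is a comparison at the level of operator norms (a transference principle), but proving that already requires the weak-type input that the paper uses. You yourself flag this step as ``the main obstacle'' and leave it unproved.

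The paper bypasses this detour entirely. It invokes the weak-$(1,1)$ estimate for the DOI from \cite{CPSZ} directly,
\[
\|T_{f^{[1]}}^{A,A}(V)\|_{\mathcal{L}_{1,\infty}(H)}\lesssim\|f'\|_{L_\infty}\|V\|_{\mathcal{L}_1(H)},
\]
which via \cite[Theorem~14(ii)]{STZ} yields the pointwise singular-value domination $\mu\bigl(T_{f^{[1]}}^{A,A}(V)\bigr)\lesssim\|f'\|_{L_\infty}\,S\mu(V)$. Then Theorem~\ref{main th}(i) (the bound $S:\Lambda_\phi\to\Lambda_\psi$) finishes the job. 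No comparison with the triangular truncation $T$ is needed, and Theorem~\ref{opt. range th T} plays no role here.
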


\begin{proof} $(i).$ By Theorem 1.2 in \cite{CPSZ}, we have
$$\|T_{f^{[1]}}^{A,A}(V)\|_{\mathcal{L}_{1,\infty}(H)}\lesssim \|f'\|_{L_{\infty}(\mathbb{R})}\|V\|_{\mathcal{L}_{1}(H)}, \quad \forall V\in (\mathcal{L}_{1}\cap \mathcal{L}_{2})(H).$$
Thus, the operator $T_{f^{[1]}}^{A,A}$ satisfies the conditions of Theorem 14 (ii) in \cite{STZ}. In other words, we have
$$\mu(T_{f^{[1]}}^{A,A}(V))\lesssim\|f'\|_{L_{\infty}(\mathbb{R})} S\mu(V), \quad V\in \Lambda_{\log}(H).$$
By Theorem \ref{main th} (i), the operator $S$ acts boundedly from $\Lambda_{\phi}(0,\infty)$ into $\Lambda_{\psi}(0,\infty).$
We have
\begin{eqnarray*}\begin{split}
\|T_{f^{[1]}}^{A,A}(V)\|_{\Lambda_{\psi}(H)}
&=\|\mu\big(T_{f^{[1]}}^{A,A}(V)\big)\|_{\Lambda_{\psi}(\mathbb{Z}_{+})}\lesssim\|f'\|_{L_{\infty}(\mathbb{R})}\|S\mu(V)\|_{\Lambda_{\psi}(0,\infty)}\\
&\lesssim\|f'\|_{L_{\infty}(\mathbb{R})}\|\mu(V)\|_{\Lambda_{\phi}(0,\infty)}=c_{abs}\|f'\|_{L_{\infty}(\mathbb{R})}\|V\|_{\Lambda_{\phi}(H)}, \,\ V\in \Lambda_{\phi}(H).
\end{split}\end{eqnarray*}
In other words, $T_{f^{[1]}}^{A,A}:\Lambda_{\phi}(H)\rightarrow \Lambda_{\psi}(H)$ is bounded.

$(ii).$
The double operator integral $T^{A,A}_{f^{[1]}}([A,B])$ is equal to $[f(A),B]$  for the operators $A,B\in B(H)$ such that $[A,B]\in \Lambda_{\phi}(H)$ (see \cite[Proposition 2.6]{PS}). Therefore, the commutator estimate follows from part $(i).$ Finally, applying the commutator estimate to the operators
$$A=\left(
\begin{array}{cc}
X & 0 \\
0 & Y \\
\end{array}
\right), \quad B=\left(
\begin{array}{cc}
0 & 1 \\
1 & 0 \\
\end{array}
\right),
$$
we obtain Lipschitz estimate.

\end{proof}
In particular, we obtain the following result.
\begin{thm} Let the assumptions of Theorem \ref{Tphitophi} hold. If any one of the following holds
\begin{enumerate}[{\rm (i)}]
\item $T:\Lambda_{\phi}(H)\to \Lambda_{\phi}(H);$
\item $S^d:\Lambda_{\phi}(\mathbb{Z}_{+})\to \Lambda_{\phi}(\mathbb{Z}_{+});$
\item there exists a constant $c_{\phi}>0$ such that
$$\frac{1}{n+1}\sum_{k=1}^{n}\frac{\phi(k)}{k}+\sum_{k=n+1}^{\infty}\frac{\phi(k)}{k^2}\leq c_{\phi}\frac{\phi(n)}{n},\quad n\geq1.$$
\end{enumerate} then
\begin{enumerate}[{\rm (i)}]
\item The double operator integral $T_{f^{[1]}}^{A,A}:\Lambda_{\phi}(H)\rightarrow \Lambda_{\phi}(H)$ is bounded and
$$\|T_{f^{[1]}}^{A,A}\|_{\Lambda_{\phi}(H)\rightarrow \Lambda_{\phi}(H)}\leq c_{\mathbf{\Lambda_{\phi}}}\|f'\|_{L_{\infty}(\mathbb{R})}$$
for all self-adjoint operator $A=A^{*}$ in $B(H)$ and Lipschitz function $f:\mathbb{R}\rightarrow \mathbb{R};$

\item  For all self-adjoint operators $A,B\in B(H)$ such that $[A,B]\in \Lambda_{\phi}(H)$ and for every Lipschitz function $f:\mathbb{R}\rightarrow \mathbb{C},$ we have
$$\|[f(A),B]\|_{\Lambda_{\phi}(H)}\lesssim\|f'\|_{L_{\infty}(\mathbb{R})}\|[A,B]\|_{\Lambda_{\phi}(H)},$$
where $[A,B]:=AB-BA.$
For all self-adjoint operators $X,Y\in B(H)$ such that $X-Y\in\Lambda_{\phi}(H)$ and for every Lipschitz function  $f:\mathbb{R}\rightarrow\mathbb{R},$ we have
$$\|f(X)-f(Y)\|_{\Lambda_{\phi}(H)}\lesssim\|f'\|_{L_{\infty}(\mathbb{R})}\|X-Y\|_{\Lambda_{\phi}(H)}.$$

\end{enumerate}
\end{thm}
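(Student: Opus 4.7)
The plan is to adapt the proof of Theorem \ref{DOIth} to the diagonal case $\psi=\phi$, with Theorem \ref{Tphitophi} replacing Theorem \ref{main th} as the source of boundedness of the Calder\'{o}n operator. By Theorem \ref{Tphitophi} the three listed hypotheses are equivalent, so I may assume all of them.

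The first step is to establish $S:\Lambda_\phi(0,\infty)\to\Lambda_\phi(0,\infty)$. Starting from condition (iii), a routine step-function comparison (using that $\phi$ is increasing and concave, so $\phi(k)/k\asymp\int_k^{k+1}\phi(t)/t\,dt$ for $k\geq 1$, and similarly for $\phi(k)/k^2$) delivers the continuous integral bound
$$\int_0^u\frac{\phi(t)}{t}dt + u\int_u^\infty\frac{\phi(t)}{t^2}dt\leq c_\phi\,\phi(u),\quad u>0.$$
The two auxiliary hypotheses of Lemma \ref{L3} with $\varphi=\phi$, namely $\lim_{t\to 0}\log(1/t)\phi(t)=0$ and $\lim_{t\to\infty}\phi(t)/t=0$, should follow respectively from the standing embedding $\Lambda_\phi\subset\Lambda_{\log}$ and from the very convergence of the tail sum in (iii). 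Lemma \ref{L3} then yields the boundedness of $S$ on $\Lambda_\phi(0,\infty)$. This passage between the discrete summation (iii) and the continuous integral condition, together with the verification of the two limit conditions, is the main technical point; everything else is essentially mechanical.

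For conclusion (i), I would invoke \cite[Theorem 1.2]{CPSZ} for the weak-type bound $T_{f^{[1]}}^{A,A}:\mathcal{L}_1(H)\to\mathcal{L}_{1,\infty}(H)$ with constant $\|f'\|_{L_\infty(\mathbb{R})}$, and then \cite[Theorem 14(ii)]{STZ} to upgrade it to the pointwise domination
$$\mu(T_{f^{[1]}}^{A,A}(V))\lesssim\|f'\|_{L_\infty(\mathbb{R})}\,S\mu(V),\quad V\in\Lambda_{\log}(H).$$
Taking $\Lambda_\phi$-norms on both sides and applying the boundedness of $S$ on $\Lambda_\phi$ obtained above yields the stated bound on $T_{f^{[1]}}^{A,A}$, exactly as in the proof of Theorem \ref{DOIth}(i).

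Conclusion (ii) then follows immediately from (i): the commutator estimate from the identity $T_{f^{[1]}}^{A,A}([A,B])=[f(A),B]$ (\cite[Proposition 2.6]{PS}), and the Lipschitz estimate from the commutator estimate by the standard $2\times 2$ block-operator trick with $A=\operatorname{diag}(X,Y)$ and $B=\left(\begin{smallmatrix}0&1\\1&0\end{smallmatrix}\right)$, precisely as at the end of the proof of Theorem \ref{DOIth}(ii).
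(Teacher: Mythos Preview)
Your overall architecture is exactly what the paper intends: the paper's own proof is simply ``The argument follows the same line as the proof of Theorems~\ref{DOIth} and~\ref{opt. range th T} and is therefore omitted,'' and your use of \cite[Theorem~1.2]{CPSZ}, \cite[Theorem~14(ii)]{STZ}, the commutator identity from \cite{PS}, and the $2\times 2$ block trick matches that template precisely.

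There is, however, a genuine gap in the bridging step you flag as ``the main technical point.'' The discrete condition (iii) involves only the values $\phi(n)$ for $n\ge 1$ and places no constraint whatsoever on $\phi|_{(0,1)}$; consequently it cannot in general deliver the continuous inequality $\int_0^u\phi(t)/t\,dt+u\int_u^\infty\phi(t)/t^2\,dt\le c_\phi\,\phi(u)$ for \emph{all} $u>0$. Concretely, if $\phi$ is linear on $[0,1]$ (which is always a legitimate concave extension), then for $u<1$ the term $u\int_u^1\phi(t)/t^2\,dt=\phi(1)\,u\log(1/u)$ already dominates $\phi(u)=\phi(1)u$, so the continuous bound fails as $u\to 0$ and $S$ is \emph{not} bounded on $\Lambda_\phi(0,\infty)$. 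Likewise, the sequence-space embedding $\Lambda_\phi(\mathbb{Z}_+)\subset\Lambda_{\log}(\mathbb{Z}_+)$ says nothing about $\phi$ near $0$, so it cannot force $\phi(t)\log(1/t)\to 0$.

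The clean repair is to avoid the continuous $S$ entirely and use hypothesis (ii) directly. Since $\mu(V)$ is constant on each $[n,n+1)$, the pointwise domination $\mu(t,T_{f^{[1]}}^{A,A}(V))\lesssim\|f'\|_\infty(S\mu(V))(t)$ evaluated at $t=n$ for $n\ge 1$ (and at, say, $t=1/2$ to control $\mu(0,\cdot)$) gives $\mu(n,T_{f^{[1]}}^{A,A}(V))\lesssim\|f'\|_\infty(S^d\mu(V))(n)$ up to absolute constants; then $\|S^d\mu(V)\|_{\Lambda_\phi(\mathbb{Z}_+)}\lesssim\|\mu(V)\|_{\Lambda_\phi(\mathbb{Z}_+)}$ from (ii) finishes part~(i). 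Parts~(ii) then follow exactly as you wrote.
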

\begin{proof} The argument follows the same line as the proof of Theorems \ref{DOIth} and \ref{opt. range th T} and is therefore omitted.
\end{proof}

\section{Acknowledgement}

The first and third authors were partially supported by Australian Research Council. The second author was partially supported by the grants AP08052004 and AP08051978 of the Science Committee of the Ministry of Education and Science of the Republic of Kazakhstan.

\end{document}